\newtheorem{dfn}{Definition}[section]
\newtheorem{thm}{Theorem}[section]
\newtheorem{thmo}{Theorem}[section]
\newtheorem{claim}[thmo]{Claim}
\newtheorem{lem}[thm]{Lemma}
\newtheorem{prop}[thm]{Proposition}
\newtheorem{corollary}[thm]{Corollary}
\newtheorem{conjecture}[thm]{Conjecture}
\def\QED{\hfill \rule{7pt}{7pt}}
\newcommand{\dG}{{\mathcal{G}}}
\newcommand{\Rmnum}[1]{\expandafter\@slowromancap\romannumeral #1@}
\begin{document}
	\title{Counting triangles in regular graphs}
	
	\author{Jialin He$^1$\and
		Xinmin Hou$^{2,4}$\and
		Jie Ma$^{3,4}$\and
		Tianying Xie$^{5}$}	
	\date{}
	
	\maketitle
    \footnotetext[1]{Department of Mathematics, Hong Kong University of Science and Technology, Clear Water Bay, Kowloon 999077, Hong Kong. Partially supported by Hong Kong RGC grant GRF 16308219 and Hong Kong RGC grant ECS 26304920. Email: majlhe@ust.hk}
    \footnotetext[2]{CAS Key Laboratory of Wu Wen-Tsun Mathematics, University of Science and Technology of China, Hefei, Anhui 230026, PR China. Partially supported by the National Natural Science Foundation of China grant 12071453. Email: xmhou@ustc.edu.cn}
    \footnotetext[3]{School of Mathematical Sciences, University of Science and Technology of China, Hefei, Anhui, 230026, China. Research supported by National Key Research and Development Program of China 2020YFA0713100, National Natural Science Foundation of China grant 12125106, and Anhui Initiative in Quantum Information Technologies grant AHY150200. Email: jiema@ustc.edu.cn.}
    \footnotetext[4]{Hefei National Laboratory, University of Science and Technology of China, Hefei 230088, China. Research supported by Innovation Program for Quantum Science and Technology 2021ZD0302902.}
    \footnotetext[5]{School of Mathematical Sciences, University of Science and Technology of China, Hefei, Anhui, 230026, China. Email: xiety@ustc.edu.cn}

\begin{abstract}
In this paper, we investigate the minimum number of triangles, denoted by $t(n,k)$, in $n$-vertex $k$-regular graphs, where $n$ is an odd integer and $k$ is an even integer.
The well-known Andr\'asfai-Erd\H{o}s-S\'os Theorem has established that $t(n,k)>0$ if $k>\frac{2n}{5}$.
In a striking work, Lo has provided the exact value of $t(n,k)$ for sufficiently large $n$, given that $\frac{2n}{5}+\frac{12\sqrt{n}}{5}<k<\frac{n}{2}$.
Here, we bridge the gap between the aforementioned results by determining the precise value of $t(n,k)$ in the entire range $\frac{2n}{5}<k<\frac{n}{2}$.
This confirms a conjecture of Cambie, de Joannis de Verclos, and Kang for sufficiently large $n$.
\end{abstract}

\section{Introduction}
We investigate the number of triangles guaranteed in regular graphs with edge density below one-half.
To be precise, for positive integers $n$ and $k$,
we define $t(n,k)$ to be the minimum number of triangles overall $k$-regular graphs on $n$ vertices.
In the case when $n$ is even, it is easy to see that $t(n,k)=0$ if and only if $k\leq \frac{n}{2}$,
which can be derived from the classic Mantel's Theorem \cite{M07} and the existence of $n$-vertex $k$-regular bipartite graphs for any $k\leq \frac{n}{2}$.
In this paper, we mainly focus on the case when $n$ is odd.\footnote{If $n$ is odd, it only makes sense to consider when $k$ is even. This is because there exist no regular graphs with an odd number of vertices and odd degrees.}
A special case of a celebrated theorem of Andr\'asfai, Erd\H{o}s and S\'os \cite{AES} states that
every $n$-vertex graph with minimum degree greater than $\frac{2n}{5}$ either is bipartite or contains some triangles.
Since there exist no bipartite regular graphs with an odd number of vertices,
this result implies that in the case when $n$ is odd, we have $t(n,k)>0$ if $k>\frac{2n}{5}$.\footnote{In fact, for infinitely many odd integers $n$, we can state this as $t(n,k)>0$ if and only if $k>\frac{2n}{5}$.}
It is natural to ask for the exact value of $t(n,k)$ in the range $\frac{2n}{5}<k\leq \frac{n}{2}$.
The following theorem proved by Lo \cite{Lo09} offers a comprehensive understanding of nearly all integers $k$ within this range when the odd integer $n$ is sufficiently large.

\begin{thm}[Lo, \cite{Lo09}]\label{Thm:Lo's thm}
For every odd integer $n\ge 10^7$ and even integer $k$ with $\frac{2n}{5}+\frac{12\sqrt{n}}{5}< k< \frac{n}{2}$,
$$t(n,k)=\frac k 4(3k-n-1).$$
Moreover, the extremal graphs for $t(n,k)$ must belong to the family $\dG(n,k)$ given by Definition~\ref{dfn_fig}.
\end{thm}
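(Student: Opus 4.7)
The plan is to combine a stability/near-bipartite reduction, built on Andr\'asfai--Erd\H{o}s--S\'os, with an exact counting argument driven by regularity. Let $G$ be an $n$-vertex $k$-regular graph with $k$ in the stated range, and suppose $G$ minimizes triangles. Since $n$ is odd and $k > \frac{2n}{5}$, the AES theorem already gives at least one triangle. The first step is to sharpen this into a quantitative \emph{stability} statement: if $G$ has few triangles, then $G$ is close to bipartite in the sense that there is a max cut $V(G) = A \cup B$ with $|A| \le |B|$, for which the total number of internal edges $e(A) + e(B)$ is small, of order $O(\sqrt{n})$. The threshold $k > \frac{2n}{5} + \frac{12\sqrt n}{5}$ is presumably calibrated to make exactly this stability step go through with controlled error.

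Once the near-bipartite partition is in hand, $k$-regularity provides the key identity: summing degrees on each side gives $e(B) - e(A) = \frac{k(|B|-|A|)}{2}$, so the internal edge counts are rigidly linked to the imbalance of the partition. Since $n$ is odd, $|B|-|A|$ is an odd positive integer, and one expects the optimum at $|B| - |A| = 1$, i.e.\ $|A| = (n-1)/2$ and $|B| = (n+1)/2$. Every triangle of $G$ then contains at least one internal edge, and each internal edge $uv$ produces triangles through its common neighbors on the opposite side, controlled by the codegree bound $|N_A(u) \cap N_A(v)| \ge d_A(u) + d_A(v) - |A|$ when $uv \in E(B)$, and the symmetric bound for $uv \in E(A)$. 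Summing these over internal edges and inserting the regularity identity $d_A(v) + d_B(v) = k$ produces a lower bound on the triangle count which, after optimizing over the distribution of internal edges between $A$ and $B$ and over the internal degree sequences, should reduce exactly to $\frac{k}{4}(3k - n - 1)$.

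The final piece is matching the lower bound with a construction and identifying the extremal graphs. I would verify directly that members of $\mathcal{G}(n,k)$ attain $\frac{k}{4}(3k-n-1)$ triangles, and then trace through the equality conditions in the codegree step and the stability step to conclude that any minimizer lies in $\mathcal{G}(n,k)$. The main obstacle, I expect, is the exact counting in the second step: the codegree inequalities give only one-sided bounds, and to extract the sharp constant one must argue that the optimal splitting of $2e(A) + 2e(B) + k = 2e(B) - 2e(A) + \ldots$ between $E(A)$ and $E(B)$ (which is \emph{not} individually pinned down by $e(B) - e(A) = \frac{k}{2}$), together with the optimal degree sequences $\{d_A(v)\}_{v \in B}$ and $\{d_B(v)\}_{v \in A}$, coincides with the $\mathcal{G}(n,k)$ configuration. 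Proving that any deviation from the extremal degree distribution strictly increases the triangle count is the delicate part, and it is likely here that the $\frac{12\sqrt n}{5}$ slack is consumed: the stability error of order $\sqrt n$ needs to be absorbed by the quadratic gap in the codegree inequality, which barely closes when $k - \frac{2n}{5}$ is of order $\sqrt n$, and fails below that scale -- explaining both the form of Lo's hypothesis and why the present paper requires new ideas to bridge the remaining range.
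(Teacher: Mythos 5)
Your outline is in the same family as Lo's actual argument (max cut, codegree counting over internal edges, then an extremal optimization over internal degree sequences), but as written it has two genuine problems. First, the quantitative stability claim is false: you assert that a triangle-minimizing $G$ admits a max cut with $e(A)+e(B)=O(\sqrt n)$, and later that the $\tfrac{12\sqrt n}{5}$ slack is there to absorb a ``stability error of order $\sqrt n$''. But the extremal graphs in $\mathcal{G}(n,k)$ themselves have $\Theta(n)$ internal edges in every bipartition (the special vertex $v$ contributes $k/2\approx n/5$ edges inside whichever side it lies on), so no such $O(\sqrt n)$ bound can hold. What the AES-based stability step actually gives is that $G$ can be made bipartite by deleting $O(k^{3/2})$ edges, i.e.\ $e(A)+e(B)\le O(k^{3/2})$ for the max cut; the $\sqrt n$ slack in Lo's hypothesis is consumed \emph{inside that step} (after removing the heavy edges and the $O(\sqrt k)$ vertices of large heavy-degree, one still needs minimum degree above $\tfrac{2}{5}$ of the remaining order to apply Andr\'asfai--Erd\H{o}s--S\'os), not in the codegree counting. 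A further, separate argument is then needed to push $e(X)$ from $O(k^{3/2})$ down below $\tfrac{6}{5}\cdot\tfrac{k}{2}$ before any exact analysis can start; this intermediate reduction is absent from your plan.

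Second, the heart of the proof --- the step you defer with ``optimizing over the distribution of internal edges and the internal degree sequences should reduce exactly to $\frac k4(3k-n-1)$'' --- is exactly what requires a new lemma, and you do not supply it. The codegree sum gives $T(G)\ge e(X)(2k-|Y|)-\sum_{u\in X}d_X(u)^2$, so one needs a sharp \emph{upper} bound on $\phi(G[X])=\sum d_X(u)^2$ given $e(X)=\tfrac12\beta k$ and the max-cut constraint $d_X(u)\le k/2$; this is Lo's Lemma~\ref{Lem: Lo's lemma} ($\phi\le(\beta^2-2\beta+2)r^2+(5\beta-4)r$ for $\Delta\le r$, $e=\beta r$, $1\le\beta<\tfrac65$), whose equality case (a star of size $r$ plus isolated vertices) is also what forces the extremal structure and hence membership in $\mathcal{G}(n,k)$. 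Note also that $e(A)$, $e(B)$ need no joint optimization: at equality one gets $\beta=1$, $|X|-|Y|=1$ and $e(Y)=0$, which follows from the parity/regularity identity you wrote plus the $\phi$ bound, not from a delicate balancing act. Without the $\beta<\tfrac65$ reduction and the sharp $\phi$ lemma with its equality analysis, your plan stops short of both the exact value and the uniqueness statement, so as it stands it is a proof sketch with the decisive steps missing.
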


Very recently, Cambie, de Joannis de Verclos, and Kang \cite{CdVK} have revisited this problem by investigating the concept of regular Tur\'an numbers.
One of their main results states that $t(n,k)\geq \frac{n^2}{300}$ for all odd $n$ and even $k$ with $k>\frac {2n}{5}.$
They also highlighted the following conjecture (see Conjecture 20 in \cite{CdVK}).

\begin{conjecture}[Cambie, de Verclos and Kang, \cite{CdVK}]\label{Conj: minimum number of trianglees}
For every odd integer $n$ and even integer $k$ with $\frac{2n}{5}< k< \frac{n}{2}$,
$t(n,k)=\frac k 4(3k-n-1).$ Moreover, the extremal graphs must belong to $\dG(n,k)$.
\end{conjecture}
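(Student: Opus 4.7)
The plan is to establish the conjecture for sufficiently large $n$ by closing the narrow gap $\frac{2n}{5}<k\le\frac{2n}{5}+\frac{12\sqrt n}{5}$ left open by Theorem~\ref{Thm:Lo's thm}. The upper bound $t(n,k)\le\frac k4(3k-n-1)$ is obtained by exhibiting a specific member of $\dG(n,k)$ and directly counting its triangles; the construction is a near-balanced blowup of $C_5$ together with a triangle-free regular graph placed inside each blowup class, its parameters dictated by the arithmetic of $n$ and $k$. The substance of the argument is the matching lower bound, which is the only ingredient missing in the small-$k$ window.

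\textbf{Structure of the lower bound.} Let $G$ be $k$-regular on an odd number $n$ of vertices with $k$ even, $\frac{2n}{5}<k<\frac{n}{2}$, and suppose $G$ has at most $\frac k4(3k-n-1)$ triangles. I would proceed in three phases. \emph{Phase 1 (stability):} A stability strengthening of the Andr\'asfai--Erd\H os--S\'os theorem produces a partition $V(G)=V_0\sqcup\cdots\sqcup V_4$ with $|V_i|$ close to $n/5$ and almost all edges running between cyclically consecutive classes; since $n$ is odd the $|V_i|$ cannot all be equal, so some edges beyond the $C_5$-blowup pattern are forced. \emph{Phase 2 (cleaning):} A sequence of local edge-swaps preserves $k$-regularity and does not increase the triangle count, reducing $G$ to a canonical form in which consecutive classes are joined by a complete bipartite graph, non-consecutive classes are independent, and each $V_i$ carries a $d_i$-regular graph (triangle-free whenever $d_i\le|V_i|/2$). \emph{Phase 3 (arithmetic optimization):} For such canonical graphs every internal edge of $V_i$ contributes exactly $|V_{i-1}|+|V_{i+1}|=k-d_i$ triangles, so the total count becomes an explicit function of the $(|V_i|,d_i)$; minimizing this expression subject to $k$-regularity and the parity constraints gives the value $\frac k4(3k-n-1)$, with equality pinning the extremal graphs to $\dG(n,k)$.

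\textbf{Principal obstacle.} The main difficulty lies in Phase 1 in the narrow range $\frac{2n}{5}<k\le\frac{2n}{5}+O(\sqrt n)$. Lo's argument exploits a slack of order $\sqrt n$ between $k$ and $\frac{2n}{5}$ to pin down the blowup structure vertex-by-vertex from triangle counts; once that slack shrinks, generic stability arguments yield only that $G$ is within $o(n^2)$ edges of a $C_5$-blowup, which is far too loose for an exact lower bound. I would try to close this via an iterative refinement: at each round a carefully chosen local move strictly reduces either the triangle count or the number of ``bad'' edges (those not conforming to the $C_5$-blowup pattern), while the other quantity is controlled; the process terminates at a graph of the canonical form of Phase 2. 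The delicate point is to design the moves so that the simultaneous accounting of triangles and bad edges remains tight in a regime where both quantities are of the same order of magnitude, and to rule out configurations in which many swaps are needed but each one only saves a constant number of triangles.
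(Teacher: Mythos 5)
There is a genuine gap, and it is structural: your proposal misidentifies the extremal graphs. The family $\dG(n,k)$ of Definition~\ref{dfn_fig} is not a near-balanced blow-up of $C_5$ with triangle-free graphs inside the classes; it is a complete bipartite graph on parts of size $\frac{n-1}{2}$ plus one extra vertex $v$ joined to half-sized subsets of both sides (with two factors deleted), so that \emph{every} triangle passes through $v$. Consequently your Phase~1 cannot work as stated: a minimizer need not admit (and the true minimizers do not admit) a partition into five classes of size close to $n/5$ with almost all edges between consecutive classes — an almost-bipartite $k$-regular graph with $|X|\approx|Y|\approx n/2$ has no such balanced pentagonal partition, since any five classes of size $\approx n/5$ force some class to meet both sides in $\Theta(n)$ vertices and hence to contain $\Theta(n^2)$ ``bad'' edges. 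The correct dichotomy in this degree range is ``close to bipartite'' versus ``close to a $C_5$-blow-up,'' and the content of the paper is precisely to \emph{exclude} the second alternative (via heavy edges: Proposition~\ref{Pro: G' is triangle-free} and the key Proposition~\ref{Pro: G'-U is C5-free}), not to converge to it.

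This misidentification propagates: your Phase~2 canonical family (complete bipartite between consecutive classes, independent non-consecutive classes, a $d_i$-regular triangle-free graph inside each $V_i$) does not contain $\dG(n,k)$, whose triangle-creating edges lie between two \emph{non-consecutive} classes in any pentagonal description. Moreover the optimization of Phase~3 does not return the claimed value. In your canonical family, $k$-regularity forces $\sum_i d_i=5k-2n$ and every class to have size at least $n-2k$, so the triangle count $\frac12\sum_i d_i|V_i|(k-d_i)$ is at least about $(5k-2n)(n-2k)^2$; for $k$ just above $\frac{2n}{5}$ this is roughly $\frac{2n^2}{25}$, about four times the true minimum $\frac k4(3k-n-1)\approx\frac{n^2}{50}$. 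So even granting the (themselves unjustified) regularity- and triangle-preserving swaps of Phase~2, the argument would prove a lower bound over the wrong family and could never certify $\dG(n,k)$ as extremal. The same issue affects your upper bound: a blow-up-with-internal-graphs construction does not achieve $\frac k4(3k-n-1)$; one must use the bipartite-plus-one-vertex construction. The paper's route is the reverse of yours: show a minimizer can be made bipartite by deleting $O(k^{3/2})$ edges (Theorem~\ref{Thm: Main Reduced}, whose proof treats any pentagonal $C_5$-type structure as the contradiction hypothesis), and then finish the near-bipartite case exactly via a max-cut partition and Lemma~\ref{Lem: Lo's lemma}.
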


The main contribution of this paper is to establish this conjecture for sufficiently large integers $n$.

\begin{thm}\label{Thm: Main}
For every odd integer $n\ge 10^9$ and even integer $k$ with $\frac{2n}{5}< k< \frac{n}{2}$,
$t(n,k)=\frac k 4(3k-n-1).$ Moreover, the extremal graphs must belong to $\dG(n,k)$.
\end{thm}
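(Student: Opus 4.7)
The plan is to complement Lo's Theorem~\ref{Thm:Lo's thm}, which already establishes the desired equality for $\frac{2n}{5} + \frac{12\sqrt{n}}{5} < k < \frac{n}{2}$, by treating the narrow gap $\frac{2n}{5} < k \leq \frac{2n}{5} + \frac{12\sqrt{n}}{5}$. Throughout this strip, $k - \frac{2n}{5}$ is at most of order $\sqrt{n}$ and can be as small as an absolute constant, while the conjectured extremal count $\frac{k}{4}(3k-n-1) = \Theta(n^2)$ is substantial. This suggests that any nearly extremal graph must inherit strong global structure from the Andr\'asfai-Erd\H{o}s-S\'os Theorem, resembling a near-balanced blow-up of the pentagon $C_5$.

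The first step would be a stability lemma: every $n$-vertex $k$-regular graph $G$ in this strip with $t(G)$ not much larger than $\frac{k}{4}(3k-n-1)$ admits a partition $V(G) = V_1 \cup \cdots \cup V_5$, indexed cyclically, with each $|V_i|$ close to $n/5$ and almost all edges running between consecutive parts. The Andr\'asfai-Erd\H{o}s-S\'os Theorem forces any triangle-free graph of minimum degree above $\frac{2n}{5}$ to be bipartite, and no $k$-regular bipartite graph on an odd number of vertices exists, so any such $G$ with few triangles must be close to a $C_5$-blow-up; a supersaturation-type argument should then promote ``few triangles'' into ``close to a blow-up'' in a quantitative sense.

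The second step is the precise counting. Regularity imposes rigid relations: a perfect $C_5$-blow-up with parts of sizes $a_1,\ldots,a_5$ requires $a_{i-1}+a_{i+1}=k$ for all $i$, forcing $a_i = k/2$ and $n = 5k/2$, which is incompatible with $n$ odd and $k > \frac{2n}{5}$. The resulting deficit must be absorbed by exceptional edges---either within a part or between non-consecutive parts---and each such edge produces many triangles through common neighbors in adjacent parts. A careful weighted count of these triangles, combined with a detailed case analysis of the admissible part-size distributions, should yield $t(G) \geq \frac{k}{4}(3k-n-1)$ with equality precisely for graphs in $\mathcal{G}(n,k)$ of Definition~\ref{dfn_fig}.

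The main obstacle is ensuring that Step 1 is quantitatively sharp enough to support the exact counting in Step 2. Since $k - \frac{2n}{5}$ can be as small as a constant, the partition must be correct to within $O(1)$ error per part, far finer than the $o(n)$ or $O(\sqrt{n})$ guarantees of classical stability. I expect this to require an iterative refinement: obtain a coarse partition by standard stability, then reassign individual vertices whose neighborhoods deviate from the expected pattern, in the spirit of the cleaning arguments used in exact Tur\'an-type proofs. A further subtlety is tracking the parity constraints forced by $n$ being odd, which will dictate the precise small substructures appearing in the extremal family $\mathcal{G}(n,k)$.
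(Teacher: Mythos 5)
There is a genuine gap, and it lies in the structural heart of your plan: your Step 1 misidentifies the stability structure. You claim that in the strip $\frac{2n}{5}<k\le\frac{2n}{5}+\frac{12\sqrt n}{5}$ any $k$-regular graph with close to $\frac k4(3k-n-1)$ triangles must be close to a balanced blow-up of $C_5$, and your Step 2 then tries to extract both the lower bound and the equality case $\dG(n,k)$ from that pentagonal structure. But the extremal graphs $\dG(n,k)$ of Definition~\ref{dfn_fig} are within $k$ edge deletions of a complete bipartite graph (delete the edges at the special vertex $v$), hence at edit distance $\Theta(n^2)$ from any balanced $C_5$-blow-up; they are a counterexample to the stability statement you propose to prove, and no amount of iterative cleaning can repair it. The Andr\'asfai--Erd\H{o}s--S\'os theorem does not force the pentagonal alternative here: for min degree above $\frac{2n}{5}$ and few triangles, the two locally consistent candidates are ``near-bipartite'' and ``near-pentagonal'', and the correct dichotomy (which is what the paper implements) is that the near-pentagonal configuration always produces \emph{strictly more} than $\frac k4(3k-n-1)$ triangles — each excess edge inside a part or between non-consecutive parts lies in $\Theta(k)$ triangles, and even in the hardest cases of minimal excess a careful argument rules it out — so it is eliminated by contradiction, while the exact value and the characterization of $\dG(n,k)$ are obtained in the near-bipartite case via a max-cut partition and a bound on the sum of squared degrees inside the larger side (Lo's Lemma~\ref{Lem: Lo's lemma}). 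In your proposal the near-bipartite case, which is where the extremal graphs actually live and where all of the equality analysis must happen, does not appear at all, so Step 2's conclusion ``equality precisely for $\dG(n,k)$'' cannot be reached from your hypotheses.

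Two further remarks. First, your idea of invoking Lo's Theorem~\ref{Thm:Lo's thm} verbatim for $k>\frac{2n}{5}+\frac{12\sqrt n}{5}$ and working only in the gap is legitimate in principle (the paper instead reproves the whole range, but reuses Lo's reduction once bipartite-closeness is established); however, the gap is exactly where $\dG(n,k)$ still sits, so the near-bipartite exact analysis cannot be avoided there. Second, even within the pentagonal case, the generic ``each exceptional edge gives many triangles'' count only suffices when the total degree excess on the pentagon is moderately large; when the excess is $2$ or $3$ (which happens precisely near $k\approx\frac{2n}{5}$, the regime you target) the bound is too weak and one needs finer arguments (comparing degrees of vertices in the common-neighborhood classes with the cycle vertices, controlling the $O(1)$ vertices outside these classes, and handling heavy edges between consecutive classes separately), so your ``careful weighted count'' would need substantially more than the supersaturation-style estimate you sketch.
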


Our proof of Theorem~\ref{Thm: Main} employs the approach utilized in \cite{Lo09}, incorporating several innovative technical ideas.
For a brief overview of the proof, we direct readers to the initial portions of both Section~3 and Section~4.
As a direct corollary of Theorem~\ref{Thm: Main}, we can improve the aforementioned lower bound of $t(n,k)$ by Cambie et al. in the following.

\begin{corollary}\label{Cor: Improvement of t(n,k)}
For every odd integer $n\ge 10^9$ and even integer $k>\frac{2n}{5}$,
we have $t(n,k)\ge \frac{(n+1)^2}{50}$, where the equality holds if and only if $k=\frac{2(n+1)}{5}$ and $n+1$ is divisible by $5$.
\end{corollary}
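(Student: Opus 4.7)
The corollary follows from Theorem~\ref{Thm: Main} by a monotonicity argument plus one elementary edge-counting bound, so the main task is bookkeeping rather than introducing new ideas; the plan proceeds in three short steps.

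First, I would apply Theorem~\ref{Thm: Main} to replace $t(n,k)$ by the closed form $f(k) := \frac{k(3k-n-1)}{4}$ throughout the range $\frac{2n}{5} < k < \frac{n}{2}$. Since $f'(k) = (6k - n - 1)/4$ is positive whenever $k > (n+1)/6$, the function $f$ is strictly increasing on our range, so its minimum over the admissible even integers is attained at $k_0$, the smallest even integer strictly greater than $\frac{2n}{5}$.

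Second, I would identify $k_0$ and evaluate $f(k_0)$ by a case analysis on $n \bmod 10$. Writing $k_0 = \frac{2n}{5} + \delta$, one checks directly that the values of $\delta$ are $2/5, 4/5, 6/5, 8/5, 2$ as $n \equiv 9, 3, 7, 1, 5 \pmod{10}$, respectively. In particular, $\delta = 2/5$ occurs exactly when $5 \mid n+1$, and in that case $k_0 = \frac{2(n+1)}{5}$. A direct expansion gives
\[
f(k_0) - \frac{(n+1)^2}{50} = \frac{(35\delta - 14)\,n + 75\delta^2 - 25\delta - 2}{100}.
\]
At $\delta = 2/5$ both the coefficient of $n$ and the constant term vanish, giving the claimed equality. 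For the remaining four values $\delta \in \{4/5, 6/5, 8/5, 2\}$, the coefficient of $n$ is $14, 28, 42, 56$ respectively and the constant term is $26, 76, 150, 248$, so the expression is strictly positive for every $n \ge 1$.

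Third, I would dispatch the remaining even $k \ge n/2$ by an edge-triangle inequality: for every edge $uv$ of a $k$-regular graph on $n$ vertices, $|N(u) \cap N(v)| \ge 2k - n$, so summing over edges yields at least $\frac{nk(2k-n)}{6}$ triangles. Since $n$ is odd, $2k - n$ is an odd integer, hence $\ge 1$ as soon as $k \ge n/2$. This yields $t(n,k) \ge \frac{nk}{6} \ge \frac{n(n+1)}{12}$, which exceeds $\frac{(n+1)^2}{50}$ for every $n \ge 1$ and so rules out equality in this regime. Putting the three steps together proves the corollary; the only place that requires any care is the residue analysis in the second step, but it is entirely routine once the five residues of $n \bmod 10$ have been listed.
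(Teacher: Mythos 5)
Your proposal is correct and follows essentially the same route as the paper: Theorem~\ref{Thm: Main} together with the monotonicity of $\frac{k}{4}(3k-n-1)$ on $\frac{2n}{5}<k<\frac{n}{2}$, and a Goodman/Moon--Moser-type triangle count for $k\ge \frac{n}{2}$. The only cosmetic differences are that the paper replaces your mod-$10$ case analysis by the one-line parity observation that an even integer $k>\frac{2n}{5}$ must satisfy $k\ge \frac{2(n+1)}{5}$ (since $5k=2n+1$ would force $k$ odd), and it cites the Moon--Moser inequality rather than rederiving the identical bound $\frac{nk(2k-n)}{6}$.
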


The rest of the paper is organized as follows.
Section 2 provides some preliminaries and introduces the graph family $\dG(n,k)$.
Section 3 focuses on reducing the proof of Theorem~\ref{Thm: Main} to the key intermediate result, namely Theorem~\ref{Thm: Main Reduced}.
The proof of Corollary~\ref{Cor: Improvement of t(n,k)} will also be presented at the end of Section 3.
Section 4 presents the complete proof of Theorem~\ref{Thm: Main Reduced}.
The concluding section offers some related remarks.

\section{Preliminaries}
We use standard notations on graphs throughout the paper.
Let $G$ be a graph.
We use $\Delta(G)$ to denote the maximum degree of $G$.
For a subset $U\subseteq V(G)$, the subgraph of $G$ induced by the vertex set $U$ is denoted by $G[U]$, while the subgraph obtained from $G$ by deleting all vertices in $U$ is expressed by $G\backslash U$.
Let $N_{G}(U)=\bigcap_{v\in U} N_{G}(v)$ be the set consisting of common neighbors of all vertices of $U$ in $G$.
For a subset of edges $A\subseteq E(G)$, we denote $G\setminus A$ the subgraph of $G$ obtained by deleting all edges in $A$.
Suppose that $U,W\subset V(G)$ are disjoint.
We denote $E_{G}(U,W)$ to be the set of edges of $G$ between $U$ and $W$ and let $e_{G}(U,W)=|E_{G}(U,W)|$.
For a vertex $v\in V(G)$ and a subset $X\subseteq V(G),$
we define $d_X(v)=|N_G(v)\cap X|.$
For an integer $k\geq 1$, a {\it $k$-factor} of $G$ is a spanning $k$-regular subgraph of $G$.
We use {\it $T(G)$} to denote the number of triangles in $G$.
For a vertex $v\in V(G)$ and an edge $e\in E(G)$, we use {\it $T(v)$} and {\it $T(e)$} to denote the number of triangles containing the vertex $v$ and the edge $e$ respectively.
For positive integers $m$, we write $[m]$ for the set $\{1,2,...,m\}$.
We often drop above subscripts when they are clear from context, and omit floors and ceilings whenever they are not critical.

\begin{figure}[ht!]
    \centering\includegraphics[height=8cm,width=18cm]{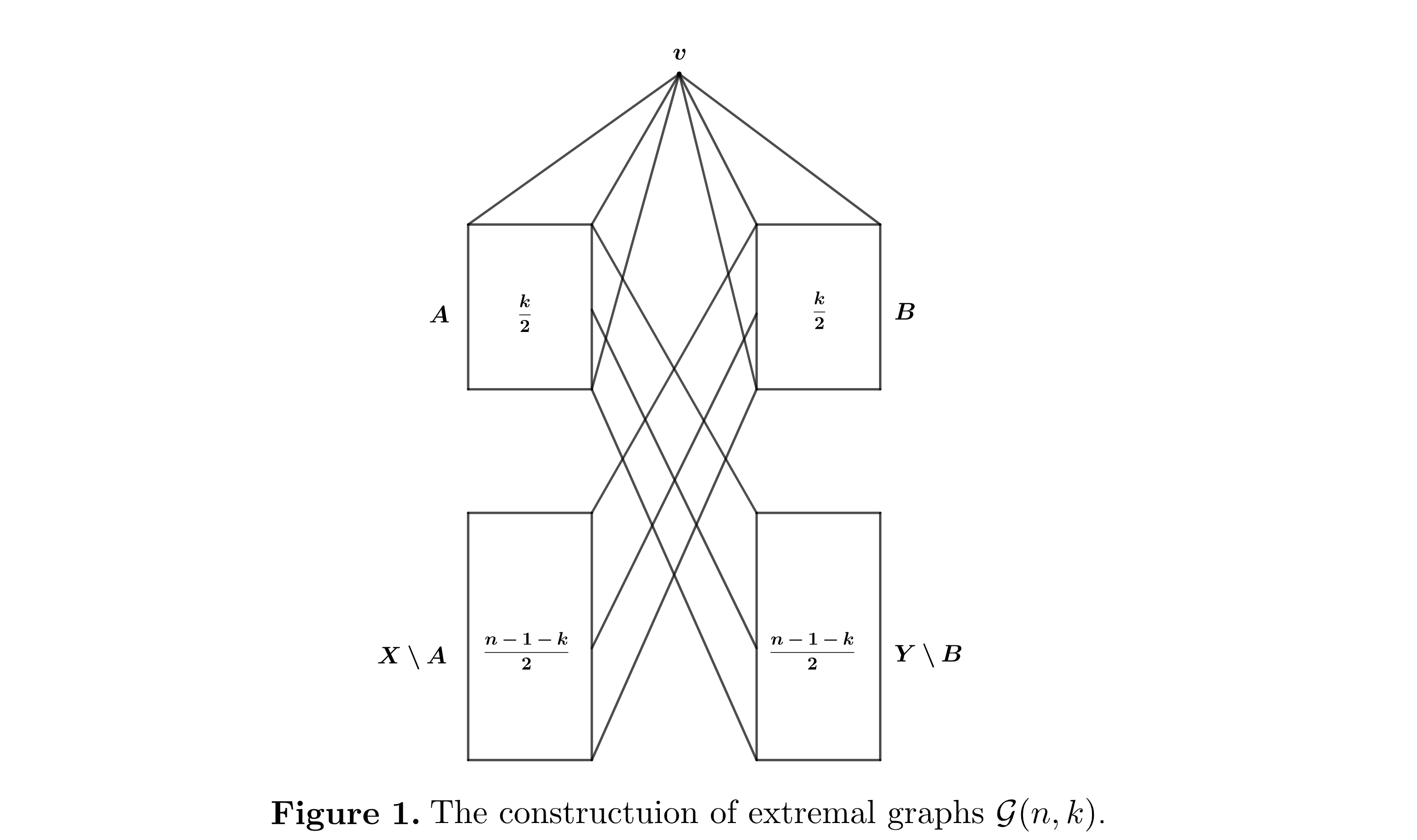}
\end{figure}

Let $n$ be an odd integer $n$ and $k$ be an even integer $k$ satisfying $\frac{2n}{5}< k< \frac{n}{2}$.
We now define an important family of $n$-vertex $k$-regular graphs $\dG(n,k)$ as follows (see Figure~1).
\begin{dfn}\label{dfn_fig}
Let $G$ be a complete bipartite graph with parts $X, Y$, each of size $\frac{n-1}{2}$.
Let $A\subseteq X$, $B\subseteq Y$ be two subsets of size $\frac{k}{2}.$
The family $\dG(n,k)$ consists of all $n$-vertex graphs obtained by the following procedures:
adding a new vertex $v$ to $G$, joining $v$ to all vertices of $A$ and $B$,
removing the edges of a $\frac{n-2k+1}{2}$-factor from the induced subgraph $G[A\cup B]$, and removing the edges of a $\frac{n-2k-1}{2}$-factor from the induced subgraph $G[(X\setminus A)\cup (Y\setminus B)]$.
\end{dfn}
\noindent It is easy to see that any graph $G\in \dG(n,k)$ is $k$-regular.
Moreover, all triangles in $G$ contain the vertex $v$, and every neighbor of $v$ is contained in exactly $\frac k 2-\frac{n-2k+1}{2}=\frac{3k-n-1}{2}$ triangles.
Thus, the number of triangles in any graph $G\in \dG(n,k)$ is equal to $\frac k 4(3k-n-1).$

\section{Proofs of Theorem~\ref{Thm: Main} and Corollary~\ref{Cor: Improvement of t(n,k)}}
In this section, we prove our main result -- Theorem~\ref{Thm: Main}.
We make use of Lo's approach \cite{Lo09}, where he proved that for any odd integer $n\geq 10^7$ and even $k$ satisfying $\frac{2n}{5}+\frac{12\sqrt{n}}{5}< k< \frac{n}{2}$,
let $G$ be an $n$-vertex $k$-regular graph with the minimum number of triangles, then $G\in \dG(n,k)$.
A key intermediate step in Lo's proof is to show, by using Andr\'asfai-Erd\H{o}s-S\'os Theorem \cite{AES}, that such $G$ can be made bipartite by deleting $O(k^{\frac 3 2})$ edges.
In our proof, we present a novel insight wherein, by employing a more thorough structural analysis instead of Andr\'asfai-Erd\H{o}s-S\'os Theorem,
we establish that, when the broader condition $\frac{2n}{5} < k < \frac{n}{2}$ holds, one can also make the graph $G$ bipartite by removing $O(k^{\frac{3}{2}})$ edges.
To be formal, we prove the following.

\begin{thm}\label{Thm: Main Reduced}
Let $G$ be an $n$-vertex $k$-regular graph, where $n\ge 10^9$ is odd and $\frac{2n}{5}< k< \frac{n}{2}$ is even.
If $T(G)\le \frac k 4(3k-n-1)$, then $G$ can be made bipartite by deleting at most $\frac {15}{2} k^{\frac 3 2}$ edges.
\end{thm}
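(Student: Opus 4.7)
The plan is to locate a vertex whose induced neighborhood is nearly independent, use it to seed a large ``anchor'' independent set, propagate the anchor into a full bipartition $(L,R)$ of $V(G)$, and then bound the edges internal to $L$ and $R$ using the triangle budget.

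For the first two steps, averaging $\sum_{v}T(v)=3T(G)\le \tfrac{3k(3k-n-1)}{4}$ over the $n$ vertices, together with the crude bound $3k-n-1<n/2$, produces a vertex $v^*$ with $T(v^*)<\tfrac{3k}{8}$. Since $T(v^*)=e(G[N(v^*)])$, the induced subgraph $H=G[N(v^*)]$ has $k$ vertices, fewer than $3k/8$ edges, and average degree below $3/4$. A Caro--Wei/Tur\'an bound then yields an independent set $I\subseteq N(v^*)$ of size at least $\tfrac{k^2}{k+2T(v^*)}\ge \tfrac{4k}{7}$. In an extremal graph $H\in\dG(n,k)$, such an $I$ plays the role of (most of) one of the blocks $A$ or $B$ attached to the special vertex, so $I$ should anchor one side of the bipartition we seek.

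For the propagation, I would classify each $u\in V(G)\setminus N(v^*)$ by $d_I(u)=|N(u)\cap I|$. A direct check in the extremal graph shows that $d_I(u)$ is bimodal, taking values $0$ or $|I|$; fixing a threshold $\theta$ of order $|I|/2$ and setting
\[
L=I\cup\{u\notin N(v^*):d_I(u)<\theta\},\qquad R=V(G)\setminus L
\]
classifies vertices correctly in the extremal case. A subsequent phase of single-vertex swaps -- moving any vertex across whenever it strictly decreases $e(L)+e(R)$ -- then yields a locally optimal bipartition in which each $v\in L$ satisfies $d_L(v)\le d_R(v)$, and symmetrically for $R$.

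The main obstacle is the fourth step: converting local optimality together with the triangle bound into the quantitative estimate $e(L)+e(R)\le \tfrac{15}{2}k^{3/2}$. The intended mechanism is a weighted double count: each internal edge $uw$ forces $u$ and $w$ to have many common neighbors in the opposite side (by local optimality and inclusion--exclusion), and these common neighbors create triangles $uwz$, each contributing to $T(G)$. Summing over internal edges, coupled with a Cauchy--Schwarz-type estimate to extract a $\sqrt{k}$ gain from the fluctuation in degrees into $R$ (resp. $L$), should convert the hypothesis $T(G)\le \tfrac{k(3k-n-1)}{4}$ into the claimed $O(k^{3/2})$ bound on $e(L)+e(R)$. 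The delicate regimes are $k\approx 2n/5$, where the triangle budget almost vanishes, and $k$ near $n/2$, where the common-neighbor lower bound $d(u)+d(w)-(n-1)$ degenerates; handling both uniformly without invoking Andr\'asfai--Erd\H os--S\'os is precisely the new structural ingredient that the paper advertises, and I expect most of the technical effort to concentrate there.
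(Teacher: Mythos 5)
Your steps 1 and 2 are sound (averaging gives a vertex $v^*$ with $T(v^*)<\tfrac{3k}{8}$, and Caro--Wei gives an independent set $I\subseteq N(v^*)$ of size at least $\tfrac{4k}{7}$), but from step 3 onward the argument has genuine gaps. The classification of the remaining vertices by $d_I(u)$ is justified only by ``a direct check in the extremal graph'': the graph $G$ you are given is an arbitrary $k$-regular graph with $T(G)\le\tfrac k4(3k-n-1)$, and nothing in your sketch shows that $d_I(u)$ is bimodal for such a $G$. Establishing that kind of structure is precisely what a stability argument must deliver, so at this point you are assuming a weak form of the conclusion rather than proving it. The local-swap step is fine as far as it goes (a locally optimal cut satisfies $d_L(v)\le d_R(v)$ for $v\in L$), but it does not rescue the classification.

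The decisive gap is step 4, which carries all of the quantitative content and is left as an intention. The proposed mechanism --- each internal edge $uw$ has many common neighbours on the other side, hence contributes many triangles --- fails exactly on the pentagonal obstruction that makes this theorem hard. For an edge $uw$ inside $L$ of a locally optimal cut one only gets $|N(u)\cap N(w)\cap R|\ge d_R(u)+d_R(w)-|R|\ge k-|R|$, and since $k<\tfrac n2$ while $|R|$ is typically about $\tfrac n2$, this bound is vacuous throughout the range, worst of all when $k$ is close to $\tfrac{2n}{5}$ (not near $\tfrac n2$, as you suggest). Concretely, a graph close to a balanced blow-up of $C_5$ is $\Theta(n^2)$ edges away from bipartite, every cut leaves $\Theta(n^2)$ internal edges, and those internal edges lie in very few triangles, so no Cauchy--Schwarz refinement of your double count can by itself produce the $O(k^{3/2})$ bound; one must show that the triangle budget together with $k$-regularity and $k>\tfrac{2n}{5}$ actually excludes such pentagonal structure. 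This is where the paper spends essentially all of its effort: it removes the ``heavy'' edges (those in at least $\tfrac{3k-n-1}{3}$ triangles, at most $\tfrac94k$ of them), shows the remainder $G'$ is triangle-free, and then proves the key fact that $G'$ minus the $O(\sqrt k)$ vertices of large heavy-degree contains no $C_5$, via a delicate analysis showing that a $C_5$ would force $G'$ to resemble a balanced $C_5$-blow-up and the heavy edges would then recreate more than $\tfrac k4(3k-n-1)$ triangles; bipartiteness and the $\tfrac{15}{2}k^{3/2}$ deletion bound then follow by short counting. Your outline contains no substitute for this exclusion of the $C_5$-like structure, so as it stands it does not prove the theorem.
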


In the remaining of this section, we will demonstrate the proof of Theorem~\ref{Thm: Main} while assuming the validity of Theorem~\ref{Thm: Main Reduced}.
We also need a useful lemma proved by Lo (see Lemma 3.2 in \cite{Lo09}).
For a graph $H$, let $\phi(H)$ be the sum of squares of degrees of all vertices of $H.$

\begin{lem}[Lo \cite{Lo09}]\label{Lem: Lo's lemma}
Let $r>4500$ be a positive integer, and let $H$ be an $n$-vertex graph with $\Delta(H)\le r$ and $e(H)=\beta r$ for $1\le \beta < \frac{6}{5}$.
Then $\phi(H)\le (\beta^2-2\beta+2)r^2 + (5\beta-4)r$.
Furthermore, equality holds if and only if $H$ has a vertex $u$ of degree $r$, a vertex $v\in N(u)$ with degree $(\beta-1)r+1$ and $n-r-1$ isolated vertices.
\end{lem}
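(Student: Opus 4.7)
The plan is to solve the following extremal problem: among $n$-vertex simple graphs $H$ with $\Delta(H) \le r$ and $e(H) = \beta r$ for $\beta \in [1, 6/5)$, determine the maximum of $\phi(H) = \sum_v d_H(v)^2$. The strategy is a shifting (edge-swap) argument that progressively concentrates degree onto few vertices, followed by a direct computation on the resulting star-like structure.

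The key tool is a Shifting Lemma: if $u, v, w$ are vertices of $H$ with $d_H(u) \ge d_H(v)$, $vw \in E(H)$, $w \neq u$, $uw \notin E(H)$, and $d_H(u) < r$, then the swap $H' = (H \setminus \{vw\}) \cup \{uw\}$ is simple, preserves the edge count, and satisfies
\[
\phi(H') - \phi(H) = (d_H(u)+1)^2 + (d_H(v)-1)^2 - d_H(u)^2 - d_H(v)^2 = 2(d_H(u) - d_H(v)) + 2 \ge 2.
\]
Consequently, no maximizer of $\phi$ admits such a shift.

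Using this, I would pin down the structure of a maximizer in two stages. In Stage 1, I show that some vertex $u$ has $d_H(u) = r$: otherwise, for $u$ of maximum degree $d_1 < r$, the no-shift condition forces every edge of $H$ to lie inside $N(u) \cup \{u\}$, so $H$ is supported on at most $d_1 + 1 \le r$ vertices, and a graphicality-based count would rule this sub-case out. In Stage 2, fix such a $u$ and use the decomposition
\[
\phi(H) = r^2 + r + \phi(H-u) + 2(2a + b),
\]
where $a,b,c$ count edges of $H$ with, respectively, both endpoints in $N(u)$, exactly one endpoint in $N(u)$ (and one outside $N(u) \cup \{u\}$), and both endpoints outside $N(u) \cup \{u\}$, so that $a + b + c = (\beta-1)r$. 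Reapplying the Shifting Lemma inside $H - u$ (whose $(\beta-1)r$ edges are far fewer than $r - 1$ since $\beta < 6/5$) collapses $H - u$ into a star $K_{1,(\beta-1)r}$ plus isolated vertices, giving $\phi(H-u) = ((\beta-1)r)^2 + (\beta-1)r$; a final shift moves both the center and the leaves of this star into $N(u)$, which maximizes $2(2a + b) = 4(\beta-1)r$ subject to $a+b+c=(\beta-1)r$. Substituting yields
\[
\phi(H) \le r^2 + r + (\beta-1)^2 r^2 + (\beta-1)r + 4(\beta-1)r = (\beta^2 - 2\beta + 2)r^2 + (5\beta - 4)r,
\]
and tracing the equality conditions backward through the shifts recovers the unique extremal structure described in the statement.

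The main obstacle is Stage 1 --- ruling out maximizers with $\Delta(H) < r$. The crude bound $\phi(H) \le 2 e(H) \cdot \Delta(H)$ is too lossy near $\beta = 1$ to beat the target, so a finer argument is required: I would iterate the Shifting Lemma inside the support of $H$ (a graph on at most $r$ vertices with $\beta r$ edges) and combine it with graphicality constraints of Erd\H{o}s--Gallai type to force $\phi(H) \le (r-1)^2 + O(r)$, strictly below $(\beta^2 - 2\beta + 2)r^2 + (5\beta-4)r$ once $r$ is large. The hypothesis $r > 4500$ is expected to absorb all lower-order discrepancies arising in this case analysis.
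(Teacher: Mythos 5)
A preliminary remark: the paper does not prove this lemma at all --- it is quoted verbatim from Lo \cite{Lo09} (Lemma 3.2 there) --- so there is no in-paper proof to compare with, and your proposal has to stand on its own. As written, it has two genuine gaps.

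The first is in Stage 2. The step ``the Shifting Lemma collapses $H-u$ into a star'' does not follow. The no-shift condition does not characterize stars: if, say, $H-u$ consists of a clique contained in the closed neighbourhood of its maximum-degree vertex, no shift of your type is available, yet the graph is not a star; so even a shift-free terminal configuration of your process need not be the one you compute $\phi$ for. Worse, a shift chosen by $H-u$-degrees is only guaranteed to increase $\phi(H-u)$; its effect on $\phi(H)$ is $2\bigl(d_H(x)-d_H(v)\bigr)+2$, which is $0$ when the gaining vertex lies outside $N(u)$ and the losing vertex inside, so maximality of $\phi(H)$ alone does not forbid such configurations without a secondary extremality argument (e.g.\ among maximizers of $\phi(H)$, maximize $\phi(H-u)$ as well). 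What your decomposition actually needs is only the inequality $\phi(H-u)\le m'(m'+1)$ with $m'=e(H-u)=(\beta-1)r$, which has a one-line proof from $\phi(F)=\sum_{vw\in E(F)}\bigl(d_F(v)+d_F(w)\bigr)$ and $d_F(v)+d_F(w)\le e(F)+1$, together with the trivial $2a+b\le 2(a+b+c)=2m'$; plugged into your (correct) identity $\phi(H)=r^2+r+\phi(H-u)+2(2a+b)$, these give the stated bound whenever $\Delta(H)=r$, and the equality analysis then comes from the equality cases of these two inequalities rather than from ``tracing shifts backward''.

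The second gap is Stage 1, which you yourself flag as the main obstacle, and your proposed repair is quantitatively wrong. In the case $\Delta(H)=d_1<r$ the truth is not $\phi(H)\le (r-1)^2+O(r)$: a quasi-star supported on $d_1+1\le r$ vertices (one vertex of degree $d_1$, a second of degree about $(\beta-1)r$) already has $\phi\approx\bigl(1+(\beta-1)^2\bigr)r^2-(4-2\beta)r$, and $(\beta-1)^2r^2$ can be of order $r^2/25$, far beyond $O(r)$. So the $\Delta<r$ case sits only $\Theta(r)$ below the target, a crude ``graphicality'' count cannot close it, and your reduction to a graph on at most $r$ vertices merely reproduces the original optimization with cap $r-1$. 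A clean way out is to drop Stage 1 entirely: perform your decomposition at a maximum-degree vertex $u$ with $d(u)=d_1\le r$, set $m'=\beta r-d_1$, and use $\phi(H-u)\le\min\{m'(m'+1),\,2d_1m'\}$ (the second bound because $\Delta(H-u)\le d_1$) together with $2(2a+b)\le 4m'$. On the range $2d_1\ge m'+1$ the resulting bound $d_1^2+d_1+m'(m'+1)+4m'$ is convex in $d_1$, equals the target at $d_1=r$, and is below $0.85\,r^2$ at the left endpoint; on the complementary range the bound $d_1^2+d_1+2d_1m'+4m'$ stays below $0.85\,r^2$ as well (here $r>4500$ and $\beta<6/5$ absorb the lower-order terms). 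This yields the inequality for all $d_1$, strictness for $d_1<r$, and hence also the equality structure.
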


To ensure completeness, we provide a comprehensive proof, adapted from \cite{Lo09}, illustrating the reduction of Theorem~\ref{Thm: Main} to Theorem~\ref{Thm: Main Reduced}, as follows.

\medskip

\noindent\textbf{Proof of Theorem~\ref{Thm: Main} (Assuming Theorem~\ref{Thm: Main Reduced}).}
The proof presented here has been adjusted from Lo's work \cite{Lo09}.
Let $G$ be an $n$-vertex $k$-regular graph, where $n\ge 10^9$ is odd and $\frac{2n}{5}< k< \frac{n}{2}$ is even,
such that $T(G)$ is minimum.
By the construction of $\dG(n,k),$
we see that $T(G)\le \frac k 4(3k-n-1)$.

First partition $V(G)=X\cup Y$ with $e(X,Y)$ maximal.
Without loss of generality, suppose that $|X|>|Y|.$
Define $e(X)=\frac 1 2\beta k.$
By Theorem~\ref{Thm: Main Reduced}, $G$ can be made bipartite by deleting at most $\frac{15}{2} k^{\frac 3 2}$ edges.
Since $e(X)+e(Y)$ is minimum, we have that $e(X)\le \frac{15}{2} k^{\frac 3 2}$, which implies that $\beta\le 15\sqrt k.$
Since $2e(X)=k|X|-e(X,Y)\geq k(|X|-|Y|)$, we get that $\beta\ge |X|-|Y|\ge 1.$

For any edge $uv\in E(G[X]),$
$T(uv)\ge d_Y(u)+d_Y(v)-|Y|= 2k-|Y|-(d_X(u)+d_X(v)),$
where the right-hand side only counts the number of triangles $uvy$ with $y\in Y$.
Adding up, we have
\begin{align}\label{Equ: lower bound of T(G)}
T(G)\ge e(X)(2k-|Y|)-\phi(G[X])=\frac 1 2\beta k(2k-|Y|)-\phi(G[X]).
\end{align}
Using $T(G)\le \frac k 4(3k-n-1)$, $k>\frac{2n}{5},$ $|Y|\le \frac{n-1}{2}$ and $\beta\ge 1$,
we can get a lower bound of $\phi(G[X])$ that (see the equation (4.3) in \cite{Lo09})
\begin{align}\label{Equ: lower bound of phi(X)}
      \phi(G[X])\ge \frac 1 8 (3\beta-1) k^2.
\end{align}

In order to apply Lemma~\ref{Lem: Lo's lemma} with $H:=\phi(G[X])$,
we now proceed to show that $\beta<\frac 6 5.$
Let $S$ be the set of vertices $x\in X$ with $d_X(x)\ge \alpha\sqrt{\beta k}$,
where $\alpha\le \frac 1 2\sqrt{k/\beta}$ is some real to be decided later.
Then, $|S|\le 2e(X)/\alpha\sqrt{\beta k}= \sqrt{\beta k}/\alpha$.
Also $\Sigma:=\sum_{u\in S}d_X(u)\le e(X)+\binom{|S|}{2}\le (\frac 1 2+\frac 1 {2\alpha^2})\beta k.$
By the maximality of $e(X,Y)$, we have $d_X(x)\leq \frac{k}{2}$ and $d_Y(y)\le \frac k 2$ for all $x\in X$ and $y\in Y$.
Hence, we have
\begin{align*}
\phi(G[X])&=\sum_{u\in S}d_X(u)^2+\sum_{v\in X\setminus S}d_X(v)^2
\le\frac k 2\cdot\Sigma+\alpha\sqrt{\beta k}\cdot\left(2e(X)-\Sigma\right)\\
&\le \frac 1 2 \left(\frac 1 2+\frac 1 {2\alpha^2}\right)\beta k^2+\alpha\left(\frac 1 2-\frac 1 {2\alpha^2}\right)(\beta k)^{\frac{3}{2}}
\le \left(\frac 1 4+\frac 1 {4\alpha^2}\right)\beta k^2+\frac \alpha 2 (\beta k)^{\frac{3}{2}},
\end{align*}
where the second-to-last inequality holds since $\alpha\le \frac 1 2\sqrt{k/\beta}$ and $\Sigma\leq (\frac 1 2+\frac 1 {2\alpha^2})\beta k$.
Recall $\beta\le 15\sqrt k$ and $k>\frac{2n}{5}\geq 2^{24}$.
So we have $(k/\beta)^{1/6}<\frac 1 2\sqrt{k/\beta}.$
Thus, we can set $\alpha=(k/\beta)^{1/6}$ to get that
$$\phi(G[X])\le \frac{1}{4}\beta k^2+\frac 1 4 \beta^{\frac 4 3}k^{\frac 5 3}+\frac 1 2 \beta^{\frac 4 3}k^{\frac 5 3}=\frac{1}{4}\beta k^2+\frac 3 4 \beta^{\frac 4 3}k^{\frac 5 3}.$$
Combining with the inequality \eqref{Equ: lower bound of phi(X)}, we get that
$$\beta-1-6\beta^{\frac 4 3}k^{-\frac{1}{3}}\le 0.$$
Let $f(\beta)=\beta-1-6\beta^{\frac 4 3}k^{-\frac{1}{3}}.$
To prove $\beta< \frac 6 5$, it is enough to show that $f(\beta)>0$ when $\beta \in[\frac 6 5, 15\sqrt k]$.
Since $f''(\beta)=-\frac 8 3 \beta^{-\frac 2 3}k^{-\frac{1}{3}}<0$,
by convexity, we only need to check the cases when $\beta= \frac 6 5$ and $\beta=15\sqrt k.$
If $\beta= \frac 6 5$, we have
$f(\beta)=\frac 1 5-6\cdot (\frac 6 5)^{\frac{4}{3}}\cdot k^{-1/3}>0,$ as $k>2^{24}$.
If $\beta= 15\sqrt k$, we have
$f(\beta)=15\sqrt k-1-6\cdot 15^{\frac 4 3}\cdot k^{\frac 1 3}>0,$ as $k> 2^{24}$.
This completes the proof of that $\beta< \frac 6 5$.

Note that $\frac 6 5>\beta\ge |X|-|Y|\ge 1$.
So $|X|-|Y|$ is an integer which must be $1$.
By applying Lemma~\ref{Lem: Lo's lemma} with $H:=G[X]$ and $r:=\frac k 2$,
we conclude that $\phi(G[X])\le \frac 1 4(\beta^2-2\beta+2)k^2 + \frac 1 2(5\beta-4)k.$
Then, by~\eqref{Equ: lower bound of T(G)}, we have
\begin{align}\label{Eq:T(G)>k/4(3k-n-1)}
\nonumber
T(G)&\ge \frac 1 2\beta k (2k-|Y|)-\phi(G[X])
\ge \frac 1 2\beta k \left(2k-\frac{n-1}{2}\right)-\frac 1 4(\beta^2-2\beta+2)k^2 - \frac 1 2(5\beta-4)k\\
&=\frac k 4 \Big( 4\beta k-(\beta^2-2\beta+2)k-\beta(n-1)-2(5\beta-4)\Big)
\ge \frac{k}{4}(3k-n-1),
\end{align}
where the last inequality holds because $g(\beta):=4\beta k-(\beta^2-2\beta+2)k-\beta(n-1)-2(5\beta-4)\ge g(1)=3k-n-1$,
which follows from the fact $g'(\beta)=6k-2\beta k-(n-1)-10>0$ (as $\beta<\frac 6 5$, $n<\frac {5k}{2}$ and $k>2^{24}$).
Thus, we get the equality in~\eqref{Eq:T(G)>k/4(3k-n-1)}, from which we can easily drive that $\beta=1$, $|Y|=\frac{n-1}{2}$, $|X|=\frac{n+1}{2}$, and $e(G[Y])=0$.
Moreover, by Lemma~\ref{Lem: Lo's lemma}, $G[X]$ consists of a star centered at $v$ with $\frac k 2$ edges and $\frac{n-k-1}{2}$ isolated vertices.
By~\eqref{Equ: lower bound of T(G)}, any edge $vw$ in $G[X]$ is contained in exactly $2k-|Y|-(d_X(v)+d_X(w))=2k-\frac {n-1}{2}-(\frac k 2+1)=\frac{3k-n-1}{2}$ triangles.
Taking into account all of this information, it becomes evident that $G\in \dG(n,k)$, which completes the proof of Theorem~\ref{Thm: Main}.
\QED
\medskip

To conclude this section, we give the proof of Corollary~\ref{Cor: Improvement of t(n,k)}.

\medskip
\noindent\textbf{Proof of Corollary~\ref{Cor: Improvement of t(n,k)}.}
Let $n\ge 10^9$ be odd and $k>\frac{2n}{5}$ be even.
Note that in fact, $k\ge \frac{2(n+1)}{5}.$
By Theorem~\ref{Thm: Main}, when $\frac{2n}{5}< k< \frac{n}{2}$,
we have $t(n,k)=\frac k 4(3k-n-1)$, which is an increasing function with variable $k$.
Thus in this range, we can get that $t(n,k)\ge \frac{n+1}{10}(3\cdot \frac{2n+2}{5}-n-1)=\frac{(n+1)^2}{50},$ where the equality holds if and only if $k=\frac{2(n+1)}{5}$ and $n+1$ is divisible by $5$.
Now consider $k\ge \frac n 2.$
Since $n$ is odd, we see $k\ge \frac{n+1}{2}.$
Let $G$ be any $n$-vertex $k$-regular graph.
Using the classic Moon-Moser inequality \cite{MM65}, we have
$T(G)\ge \frac{4e(G)}{3}(\frac{e(G)}{n}-\frac{n}{4})$.
Since $e(G)=\frac{kn}{2},$
it gives that $T(G)\ge \frac{2kn}{3}(\frac{k}{2}-\frac{n}{4})> \frac{n^2}{3}(\frac{n+1}{4}-\frac{n}{4})=\frac{n^2}{12}>\frac{(n+1)^2}{50}$, which completes the proof of Corollary~\ref{Cor: Improvement of t(n,k)}.
\QED

\section{Proof of Theorem~\ref{Thm: Main Reduced}}
This section is devoted to the proof of Theorem~\ref{Thm: Main Reduced}.
Throughout this section, we assume that $n\ge 10^9$ is odd and $k$ is even satisfying $\frac{2n}{5}< k< \frac{n}{2}$, and $G$ is an $n$-vertex $k$-regular graph with
\begin{equation}\label{Equ: T(G)<k/4(3k-n-1)}
T(G)\le \frac k 4(3k-n-1).
\end{equation}
Our goal is to show that $G$ can be made bipartite by deleting at most $\frac {15}{2} k^{\frac 3 2}$ edges.
Call an edge $e\in E(G)$ {\it\textbf{heavy}} if $T(e)\ge \frac{3k-n-1}{3}.$
Let $H$ be the spanning subgraph of $G$ whose edge set consists of all heavy edges of $G$.
Let $$U=\{u\in V(H):d_H(u)\ge \frac 3 2 \sqrt{k}\}\ \ \text{and} \ \ G'=G\setminus E(H).$$
Building upon the approach used in \cite{Lo09}, it is crucial to show that $G'$ is ``close'' to a bipartite graph.
To make the first step, we have the following proposition proved in \cite{Lo09}.

\begin{prop}\label{Pro: G' is triangle-free}
$G'$ is triangle-free.
\end{prop}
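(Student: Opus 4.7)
The plan is to argue by contradiction: suppose $G'$ contains a triangle on vertices $x$, $y$, $z$. Since all three edges $xy$, $xz$, $yz$ lie in $E(G)\setminus E(H)$, each is non-heavy by definition, so
\[
T(xy) + T(xz) + T(yz) < 3 \cdot \frac{3k-n-1}{3} = 3k - n - 1.
\]

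The main step is to produce a matching lower bound by inclusion--exclusion on the three neighborhoods. The key observation is that for every edge $uv \in E(G)$, the set $N(u) \cap N(v)$ consists precisely of the third vertices of triangles through $uv$, giving $|N(u) \cap N(v)| = T(uv)$. Combining this with $|N(x)| = |N(y)| = |N(z)| = k$, I would write
\[
|N(x) \cup N(y) \cup N(z)| = 3k - \bigl(T(xy) + T(xz) + T(yz)\bigr) + |N(x) \cap N(y) \cap N(z)|.
\]
Bounding the left-hand side above by $n$ and the triple intersection below by $0$ then yields
\[
T(xy) + T(xz) + T(yz) \ge 3k - n,
\]
which directly contradicts the strict upper bound above.

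There is no serious obstacle: the argument is a single shot of inclusion--exclusion once one identifies $T(uv)$ with $|N(u) \cap N(v)|$. The threshold $\frac{3k-n-1}{3}$ in the definition of \emph{heavy} has evidently been tailored so that in any triangle of $G$ at least one edge must be heavy as soon as $3k > n$, which is comfortably ensured by the hypothesis $k > \frac{2n}{5}$. It is worth noting that neither the global assumption $T(G) \le \frac{k}{4}(3k-n-1)$ nor the upper bound $k < \frac{n}{2}$ plays any role in this particular proposition; it is purely a local consequence of $k$-regularity together with $3k > n$.
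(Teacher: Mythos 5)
Your proof is correct and is essentially the paper's argument in different notation: the paper double-counts vertices by their number of neighbors in the triangle to get $T(e_1)+T(e_2)+T(e_3)\ge 3k-n$, which is exactly your inclusion--exclusion bound using $T(uv)=|N(u)\cap N(v)|$ and $|N(x)\cup N(y)\cup N(z)|\le n$. Both conclude that every triangle of $G$ contains a heavy edge, so no further comparison is needed.
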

\begin{proof}
It is enough to show that any triangle in $G$ contains at least one heavy edge.
Let $T$ be any triangle in $G$ with edges $e_1,e_2,e_3$.
For $i\in\{0,1,2,3\}$, let $m_i$ denote the number of vertices in $G$ with exact $i$ neighbors in $T$.
By double-counting, we have $\sum_{i=0}^3 m_i=n,$ and $\sum_{i=0}^3 im_i=3k,$
which implies that $T(e_1)+T(e_2)+T(e_3)=m_2+3m_3\geq m_2+2m_3=3k-n+m_0\ge 3k-n.$
Hence at least one of the edges of $T$ is heavy.
This proves that $G'=G\setminus E(H)$ is triangle-free.
\end{proof}

The next proposition is important in our proof, which says that $G'\setminus U$ does not contain five-cycles.

\begin{prop}\label{Pro: G'-U is C5-free}
$G'\setminus U$ is $C_5$-free.
\end{prop}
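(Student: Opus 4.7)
The plan is to argue by contradiction: suppose $v_1 v_2 v_3 v_4 v_5$ is a $C_5$ in $G' \setminus U$. Since each $v_i \notin U$, we have $d_H(v_i) < \tfrac{3}{2}\sqrt{k}$ and so $d_{G'}(v_i) > k - \tfrac{3}{2}\sqrt{k}$. Set $D_i := N_{G'}(v_i) \setminus V(C)$, so $|D_i| > k - \tfrac{3}{2}\sqrt{k} - 2$. Triangle-freeness of $G'$ (Proposition~\ref{Pro: G' is triangle-free}) together with the $C_5$ yields $D_i \cap D_{i+1} = \emptyset$ cyclically, and for each $u \in V \setminus V(C)$ the set $S_u := \{i : u \in D_i\}$ satisfies $|S_u| \le 2$, with the two indices non-adjacent on $C_5$ whenever $|S_u| = 2$. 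Letting $x_j$ count vertices with $|S_u| = j$, double-counting gives
\[
5k - \tfrac{15}{2}\sqrt{k} - 10 \ \le \ x_1 + 2 x_2 \ \le \ 2(n-5),
\]
equivalently $k - \tfrac{2n}{5} \le \tfrac{3}{2}\sqrt{k}$. The easy case $k - \tfrac{2n}{5} > \tfrac{3}{2}\sqrt{k}$ is immediate; it remains to treat the hard case, where $2x_0 + x_1 \le \tfrac{15}{2}\sqrt{k}$ and almost all outside vertices are of type 2.

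In the hard case, for each of the five non-adjacent pairs $\{i,j\}$ on $C_5$ set $W_{ij} := \{u : S_u = \{i,j\}\}$. Two structural facts follow quickly from triangle-freeness of $G'$: (a) each $W_{ij}$ is $G'$-independent, since two $W_{ij}$-vertices share $v_i$ as a $G'$-neighbor; and (b) a $G'$-edge between $W_{ij}$ and $W_{i'j'}$ requires $\{i,j\} \cap \{i',j'\} = \emptyset$, else a shared $v_\ell$ would form a $G'$-triangle. The five index-pairs form a $C_5$ under disjointness, so the $G'$-structure on $\bigcup W_{ij}$ is an approximate blow-up of $C_5$. For $u \in W_{ij} \setminus U$, the ''exceptional'' set $U \cup (V \setminus V(C) \setminus \bigcup W_{ij})$ has size $|U| + x_0 + x_1 = O(\sqrt{k})$, so $u$ has at least $k - O(\sqrt{k})$ $G'$-neighbors in the union of the two $W$-sets whose index pair is disjoint from $\{i,j\}$. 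The resulting five cyclic inequalities, combined with $\sum |W_{ij}| \le n - 5$, force every $|W_{ij}|$ to lie within $O(\sqrt{k})$ of $k/2$ when $n \ge 10^9$ (extreme imbalance would propagate to $3k \le n + O(\sqrt{k})$, contradicting $k > \tfrac{2n}{5}$ for large $n$).

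The contradiction then emerges from a triangle count. The key lemma is that no heavy edge $uw$ can join $u \in W_{ij} \setminus U$ and $w \in W_{i'j'} \setminus U$ with $\{i,j\} \cap \{i',j'\} = \emptyset$: the $G'$-neighborhoods of $u,w$ lie in disjoint sides of the blow-up of $C_5$ and intersect only in the $O(\sqrt{k})$ exceptional vertices, while $|N_H(u)| + |N_H(w)| < 3\sqrt{k}$, giving $|N_G(u) \cap N_G(w)| = O(\sqrt{k})$; yet a heavy edge requires $T(uw) \ge \tfrac{3k-n-1}{3} \ge \tfrac{n/5-1}{3}$, which far exceeds $O(\sqrt{k})$ for $n \ge 10^9$. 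Consequently every $G$-edge within $\bigcup W_{ij}$ either belongs to the blow-up $G'$-structure (between $W$'s with disjoint index pairs) or lies in $E(H)$ (within a single $W_{ij}$, or between two $W$'s sharing some $v_\ell$). Each heavy edge of the former kind contributes at least $k - O(\sqrt{k})$ triangles via common $G'$-neighbors in the two ''next'' $W$-sets plus $v_i, v_j$; each of the latter kind yields $\ge k/2 - O(\sqrt{k})$ triangles through the ''middle'' $W$-set plus $v_\ell$.

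The proof concludes by combining these estimates. The $k$-regularity of $G$ combined with the forced sizes $|W_{ij}| = k/2 + O(\sqrt{k})$ requires a nontrivial amount of extra edges per vertex—either heavy edges inside $\bigcup W_{ij}$, heavy edges to $V(C)$, or $G'$-edges to the exceptional set—in order to meet the degree $k$. Summing the triangle contributions produced by these forced heavy edges and comparing with the heavy-edge budget $e(H) \le 9k/4$ (which itself follows from $T(G) \le \tfrac{k(3k-n-1)}{4}$), one derives a total triangle count strictly exceeding $\tfrac{k}{4}(3k-n-1)$, a contradiction. The main obstacle is the meticulous bookkeeping: the many $O(\sqrt{k})$ error terms arising from $U$, the exceptional types, chords in $V(C)$, and $H$-edges incident to $V(C)$ must all be tracked so that the leading $\Omega(k^2)$ triangle contribution from the blow-up structure genuinely dominates the target for $n \ge 10^9$.
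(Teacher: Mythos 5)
Your set-up (extracting an approximate balanced blow-up of $C_5$, the key lemma that no heavy edge can join two classes with disjoint index pairs when both endpoints avoid $U$, and the per-edge triangle lower bounds for heavy edges inside a class or between classes sharing an index) runs parallel to the paper's Claim~\ref{Clm: number of triangles} and is sound as far as it goes. The argument breaks at the decisive step, where you assert that $k$-regularity together with $|W_{ij}|=\tfrac k2+O(\sqrt k)$ ``requires a nontrivial amount of extra edges per vertex.'' It does not: for $u\in W_{ij}$ the two admissible neighbouring classes plus the exceptional set have total size $k+O(\sqrt k)$, so a vertex can realize its full degree $k$ inside $G'$ and no heavy edges are forced pointwise. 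The only genuine forcing is global: $\sum_i|N_i|\le n<\tfrac52k$ makes the total $G'$-degree deficiency $a=\sum_i d_H(v_i)$ of the five branch vertices at least $2b\ge 2$, where $b=\tfrac{5k}2-n$, and the paper converts this into a usable pointwise bound ($d_H(w)\ge d_H(v_i)$ for every $w\in N_i$) only by choosing the $C_5$ that maximizes $\sum_i d_{G'}(v_i)$ --- an idea absent from your proposal. Without it you have no lower bound on the number of heavy edges to which your triangle counts apply; moreover, the forced heavy degree could sit entirely on edges into $U$ (of size up to $3\sqrt k$) or into the leftover set $Z$, for which your key lemma and per-edge bounds say nothing. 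The paper needs the coarser split into $A$ and $B$ with threshold $k/14$ (so the bad set has size $O(1)$, not $O(\sqrt k)$), the factor-two argument \eqref{Equ:T2} for $A$--$B$ edges, and part (4) of Claim~\ref{Clm: number of triangles} for edges into $Z$, precisely to cover these possibilities.

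Even granting the correct forcing, the concluding ``bookkeeping'' cannot succeed as described, because in the critical regime there is no leading-order surplus to protect. The forced heavy degree on $\bigcup_iN_i$ is only about $\tfrac{ak}2$, and the resulting count gives $T(G)\gtrsim\tfrac a{24}k^2$, which exceeds $\tfrac k4(3k-n-1)\approx\tfrac{k^2}8$ only when $a\ge4$. The cases $a\in\{2,3\}$ (equivalently $n=\tfrac52k-1$ with an almost exact blow-up) are exactly where the paper's new work lies: for $a=3$ one must gain an extra $\Omega(k)$ of heavy degree by analysing the unique vertex $z\in Z$, the vertex $w_1\in B$, and the non-neighbours $S$ of $w_1$; for $a=2$ one must move the heavy edges between consecutive classes into the triangle-free part ($G_1=G'\cup\mathcal{E}$, $H_1=H\setminus\mathcal{E}$) and redo the count with a degree-transfer inequality. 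Your plan of ``meticulously tracking $O(\sqrt k)$ error terms so the $\Omega(k^2)$ contribution dominates'' presupposes a gap of order $k^2$ that simply is not there in the tight cases; the main terms coincide and the contradiction has to be manufactured by these additional structural arguments. In short, the outline is right (and your easy/hard case split correctly isolates the borderline range $k\le\tfrac{2n}5+O(\sqrt k)$ that this paper addresses), but the two ideas that actually close the proof --- the extremal choice of the $C_5$ yielding pointwise forcing, and the separate treatment of $a=2,3$ --- are missing, so the proposal does not establish the proposition.
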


The remaining proof is structured into two subsections.
In Subsection \ref{Subsec:reduction}, we demonstrate how the proof of Proposition \ref{Pro: G'-U is C5-free} leads to the derivation of Theorem \ref{Thm: Main Reduced}.
In Subsection \ref{Subsec: Proof of key claim}, we complete the proof of Proposition \ref{Pro: G'-U is C5-free}.

\subsection{Reducing to Proposition~\ref{Pro: G'-U is C5-free}}\label{Subsec:reduction}

\noindent\textbf{Proof of Theorem~\ref{Thm: Main Reduced} (Assuming Proposition~\ref{Pro: G'-U is C5-free}).}
First, by double-counting the number of pairs of edges contained in triangles, and the definition of $H$, we can get that
$$T(G)=\frac 1 3\sum_{e\in E(G)}T(e)\ge \frac 1 3\sum_{e\in E(H)}T(e)\ge \frac 1 3 \cdot \frac{3k-n-1}{3}\cdot|E(H)|.$$
Thus, by \eqref{Equ: T(G)<k/4(3k-n-1)}, we have $|E(H)|\le \frac 9 4 k.$
Then, by the definition of $U$, we have that
$$|U|\le \frac{2|E(H)|}{3\sqrt k/2}\le \frac{9k/2}{3\sqrt k/2}= 3\sqrt k.$$

Next, we claim that $G'\setminus U$ is bipartite.
Suppose not. Then there is an odd cycle $C_{2\ell+1}$ with minimum length $2\ell+1$ in $G'\setminus U.$
By Propositions~\ref{Pro: G' is triangle-free} and \ref{Pro: G'-U is C5-free}, we have $\ell\ge 3.$
Let $V(C_{2\ell+1})=\{v_1,v_2,...v_{2\ell+1}\}.$
By Proposition~\ref{Pro: G' is triangle-free}, we have $N_{G'}(v_1)\cap N_{G'}(v_2)=\emptyset.$
By the minimality of $C_{2\ell+1},$ we can conclude that $N_{G'}(v_1)\cap N_{G'}(v_{\ell+2})\subseteq U$ and $N_{G'}(v_2)\cap N_{G'}(v_{\ell+2})\subseteq U.$
Otherwise, one can find a cycle in $G'\setminus U$ with odd length $t\in\{\ell+2,\ell+3\}$;
since $\ell+3<2\ell+1$ when $\ell\ge3,$ this contradicts our choice of $C_{2\ell+1}$.
For any vertex $v\in V(G')\setminus U$, $d_H(v)\le \frac 3 2 \sqrt k$, which also says that $|N_{G'}(v)|\ge k-\frac 3 2 \sqrt k.$
Since $|U|\le 3\sqrt k$, we can get that
$$n\ge |N_{G'}(v_1)\cup N_{G'}(v_2)\cup N_{G'}(v_{\ell+2})|\ge |N_{G'}(v_1)|+| N_{G'}(v_2)|+|N_{G'}(v_{\ell+2})|-|U|\ge 3(k-\frac 3 2 \sqrt k)-3\sqrt k,$$
which implies that $n\ge 3k-\frac {15}{2}\sqrt k>\frac 5 2 k>n$ (as $k>\frac{2}{5}n\geq 10^8$).
This contradiction proves the claim.

Therefore, $G'$ can be made bipartite by deleting all edges with at least one endpoint in $U$,
which is at most $\binom{|U|}{2}+|U|(n-|U|)\le n|U|-\frac 1 2 |U|^2\le \frac 5 2 k |U|-\frac 1 2|U|^2.$
Since $\frac 5 2 k |U|-\frac 1 2|U|^2$ is monotonically increasing with respect to $|U|$ when $|U|\le 3\sqrt k$,
$G'$ can be made bipartite by deleting at most $\frac 5 2 k |U|-\frac 1 2|U|^2\le \frac {15} 2 k^{\frac 3 2}-\frac 9 2 k$ edges.
Note that $G'=G\setminus E(H)$ and $|E(H)|\le \frac 9 4 k$.
So $G$ can be made bipartite by deleting at most $(\frac {15} 2 k^{\frac 3 2}-\frac 9 2 k)+|E(H)|< \frac {15} 2 k^{\frac 3 2}$ edges,
completing the proof of Theorem~\ref{Thm: Main Reduced}.\QED

\subsection{Proof of Proposition~\ref{Pro: G'-U is C5-free}}\label{Subsec: Proof of key claim}
We begin with an outline of the proof for Proposition~\ref{Pro: G'-U is C5-free}.
Our approach primarily relies on stability-based reasoning.
To begin, we assume, for the sake of contradiction, that $G'\setminus U$ contains a $C_5$.
We then establish that $G'$ closely resembles a balanced blow-up of $C_5$.
Furthermore, by analyzing the addition of edges from $H$ back into $G$, we observe an excess of triangles in $G$.
This contradicts our initial assumption that $T(G)\leq \frac{k}{4}(3k-n-1)$.
Throughout the proof, all indices will be modulo 5.

Suppose on the contrary that there is a $C_5$, say $u_1u_2u_3u_4u_5u_1$, in $G'\setminus U$.
For $i\in [5]$, we define $N^1_{i}=N_{G'}(u_{i-1})\cap N_{G'}(u_{i+1})$.
By Proposition~\ref{Pro: G' is triangle-free}, $G'$ is triangle-free, so $N_{G'}(u_{i})\cap N_{G'}(u_{i+1})=\emptyset$.
Then, for $i\in[5]$, we can get that
$$n\ge |N_{G'}(u_{i-1})\cup N_{G'}(u_{i})\cup N_{G'}(u_{i+1})|\ge d_{G'}(u_{i-1})+d_{G'}(u_i)+d_{G'}(u_{i+1})-|N^1_{i}|,$$
which implies that
\begin{equation}\label{Equ: lower bound of Ni}
|N^1_i|\ge \sum_{j=i-1}^{i+1}d_{G'}(u_j)-n.
\end{equation}
Since $G'$ is triangle-free, for different $i,j\in[5],$ we also have $N^1_{i}\cap N^1_{j}=\emptyset.$
Thus, by \eqref{Equ: lower bound of Ni}, we have
\begin{align}\label{Equ: bound of union of Nis}
 n\ge \left|\bigcup_{i=1}^5 N^1_{i}\right|=\sum_{i=1}^5|N^1_i|\ge 3\sum_{i=1}^5d_{G'}(u_i)-5n,
\end{align}
By \eqref{Equ: lower bound of Ni} again (summing over indices $[5]\backslash \{i\}$), we get
$n-|N^1_i|\geq \Big(3\sum_{i=1}^5d_{G'}(u_i)-\sum_{j=i-1}^{i+1}d_{G'}(u_j)\Big)-4n,$
which implies that
$|N^1_i|\le 5n-3\sum_{j=1}^5d_{G'}(u_j)+\sum_{j=i-1}^{i+1}d_{G'}(u_j).$
Therefore, for any $i\in[5],$ we have
\begin{equation}\label{Equ: bounds of Ni}
\sum_{j=i-1}^{i+1}d_{G'}(u_j)-n\le |N^1_i|\le 5n-3\sum_{j=1}^5d_{G'}(u_j)+\sum_{j=i-1}^{i+1}d_{G'}(u_j)
\end{equation}
Note that $d_{G'}(u)\ge k-\frac{3}{2}\sqrt k$ holds for any $u\in V(G')\setminus U$.
Hence we have $\sum_{j=1}^5d_{G'}(u_j)\ge 5(k-\frac{3}{2}\sqrt k).$

Now we choose $v_1v_2v_3v_4v_5v_1$ to be a $C_5$ in $G'$ such that
$$\sum_{i=1}^5d_{G'}(v_i)=\max \left\{\sum_{j=1}^5d_{G'}(w_i):G'[\{w_1,w_2,w_3,w_4,w_5\}] \hbox{ contains a } C_5\right\}.$$
Clearly, the above cycle $u_1u_2u_3u_4u_5u_1$ is also a $C_5$ in $G'$. So we have
\begin{align}\label{Equ:dG'(vi)>k-3/2sqrt k}
\sum_{i=1}^5d_{G'}(v_i)\ge \sum_{i=1}^5d_{G'}(u_i) \ge 5\left(k-\frac{3}{2}\sqrt{k}\right).
\end{align}
We partition the vertex set $V(G)$ using the information from the cycle $v_1v_2v_3v_4v_5v_1$ in the following (see Figure~2).
For $i\in[5]$, let $a_i:=d_H(v_i)=k-d_{G'}(v_i)$,
$$a:=\sum_{i=1}^5 a_i=5k-\sum_{i=1}^5d_{G'}(v_i), ~~ N_{i}:=N_{G'}(v_{i-1})\cap N_{G'}(v_{i+1})~~ \text{and}~~ Z:=V(G)\setminus (\cup_{i=1}^5N_i).$$
Note that \eqref{Equ: lower bound of Ni}, \eqref{Equ: bound of union of Nis} and \eqref{Equ: bounds of Ni} also hold for $N_i$ and $v_i.$
\begin{figure}[ht!]
\centering\includegraphics[height=8cm,width=15cm]{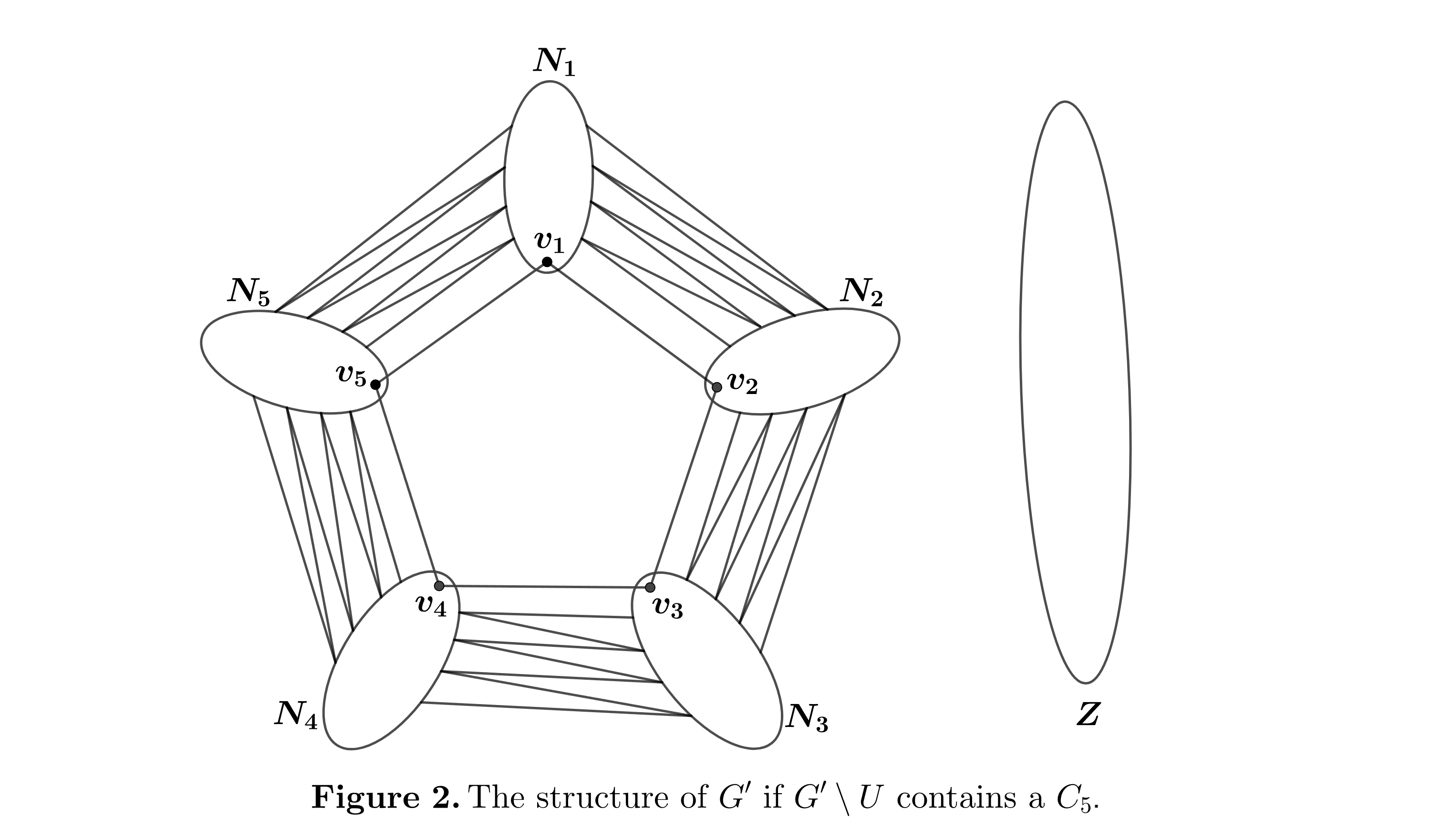}
\end{figure}
Since $\sum_{i=1}^5d_{G'}(v_i)=5k-a$, by~\eqref{Equ: bound of union of Nis}, we have that
\begin{equation}\label{Equ: bound of k}
n\ge \sum_{i=1}^5|N_i|\ge 3(5k-a)-5n = 15k-5n-3a,
\end{equation}
which implies that $\frac{2}{5}n < k \le \frac{2}{5}n+\frac{a}{5}.$
From now on, we define $b:=\frac{5k}{2}-n.$
Since $k$ is even, $b$ is an integer satisfying that $1\le b\le \frac{a}{2}.$
By \eqref{Equ:dG'(vi)>k-3/2sqrt k}, we also see that $2\leq a\leq \frac{15\sqrt{k}}{2}$.

Next, we show that $a$ can be bounded from above by an absolute constant.
We first point out that for any $i\in[5]$ and any vertex $w\in N_i$,
it holds $d_{G'}(w)\le d_{G'}(v_i)$ (equivalently, $d_{H}(w)\ge d_{H}(v_i)=a_i$).
Otherwise, $G'[\{v_1,v_2,v_3,v_4,v_5,w\}-\{v_i\}]$ also contains a $C_5$ with
$\sum_{i=1}^5d_{G'}(v_i)+d_{G'}(w)-d_{G'}(v_i)> \sum_{i=1}^5d_{G'}(v_i)$,
a contradiction to the definition of $v_1v_2v_3v_4v_5v_1$.
Also note that $\sum_{i=1}^5\sum_{w\in N_i}d_{H}(w)\le 2|E(H)|\le \frac 9 2 k$.
Using \eqref{Equ: bounds of Ni} (for $N_i$) and the facts that $n<\frac 5 2 k$ and $a\le \frac{15\sqrt{k}}{2},$
we have that
\begin{align*}
         \frac 9 2 k
         &\ge \sum_{i=1}^5\sum_{w\in N_i}d_{H}(w)\ge \sum_{i=1}^5a_i|N_i|\ge \sum_{i=1}^5a_i\Big(3k-\sum_{j=i-1}^{i+1}a_j-n\Big)\\
         &\ge \frac{k}{2}\sum_{i=1}^5a_i-\sum_{i=1}^5\sum_{j=i-1}^{i+1}a_ia_j\ge
         \frac{ak}{2}-a^2\ge \frac{ak}{2}-\frac{225}{4}k,
\end{align*}
which implies that $a\le 121$.
Therefore in the rest of the proof, it suffices to consider the cases
\begin{equation}\label{Equ:a<121}
2\le a\le 121.
\end{equation}

\subsubsection{Refining the structure}
To deal with the remaining cases, we establish more inequalities and accurate estimations to enhance the graph structure.
Let $i\in [5]$.
First, by substituting $a$ and $b$ and using the fact $3k-a\leq \sum_{j=i-1}^{i+1}d_{G'}(v_j)\leq 3k$,
we can refine the inequality \eqref{Equ: bounds of Ni} and obtain the following new inequality
 	\begin{equation}\label{Equ:new bounds of N_i}
 	\frac{k}{2}-a+b=3k-a-n\le |N_i|\le 5n-3(5k-a)+3k=\frac{k}{2}+3a-5b.
 	\end{equation}

Next, we bound $|Z|$ in terms of $a$ and $b$.
Note that $G'$ is triangle-free.
So any vertex of $Z$ is adjacent to at most one vertex of $\{v_1,v_2,v_3,v_4,v_5\}$ in $G'$.
Thus, $|Z|\ge \sum_{i=1}^5(d_{G'}(v_i)-|N_{i-1}|-|N_{i+1}|)=5k-a-2\sum_{i=1}^5|N_i|=5k-a-2(n-|Z|),$
which implies that $|Z|\le a-2b$.
On the other hand, for any $i\in [5]$, $|N_{i-1}|+|N_{i+1}|\le d_{G'}(v_i)$.
Thus, $\sum_{i=1}^5|N_i|\le (\sum_{i=1}^5d_{G'}(v_i))/2=(5k-a)/2$,
and $|Z|=n-\sum_{i=1}^5|N_i|\ge (\frac{5}{2}k-b)-\frac{5k-a}{2}=\frac{a}{2}-b$.
Summarizing, we have
\begin{equation}\label{Equ: bounds of Z}
\frac{a}{2}-b\le |Z|\le a-2b.
\end{equation}

As we discussed at the beginning of this proof,
we want to demonstrate that when adding the edges of $H$ back into $G$,
there will be many triangles in $G$, thus contradicting \eqref{Equ: T(G)<k/4(3k-n-1)}
For this purpose, it is important to classify the edges $e\in E(H)$ and estimate the value of $T(e)$.
Set $\alpha:=\frac 1 {14}$ and let $$A=\{v\in V(G):d_{H}(v)\le \alpha k\}\ \ \text{and}\ \ B=V(G)\setminus A.$$
Since $|E(H)|\le \frac 9 4 k$, we have that $|B|\le \frac{9k/2}{\alpha k}\le \frac{9}{2\alpha}=63$.
Let $E^0:=E(H)\setminus E(H[Z])$
and it is straightforward to see that
$E^0=E_p\cup E_q\cup E_r\cup E_s,$ where $E_p=\{xy\in E(H): x, y\in N_i \hbox{ for some }i\in[5]\}$,
$E_q=\{xy\in E(H): x\in N_i \hbox{ and } y\in N_{i+1} \hbox{ for some } i\in [5] \}$,
$E_r=\{xy\in E(H): x\in N_i,\  y\in N_j \hbox{ and } |i-j|=2 \hbox{ for some } i,j\in[5] \}$ and
$E_s=\{xy\in E(H): x\in Z \hbox{ and } y\in N_i \hbox{ for some } i\in[5]\}$.

Let $E^0=E^1\cup E^2\cup E^3,$ where $E^1=E^0\cap E(H[A])$, $E^2=E^0\cap E(H[A,B])$ and $E^3=E^0\cap E(H[B])$. 		
The next claim shows that most edges of $E^1$ are contained in many different triangles.

 	\begin{claim}\label{Clm: number of triangles}
        For $\lambda\in\{p,q,r,s\},$ let $E_\lambda^1=E_\lambda\cap E^1$. Then, we have
            \begin{itemize}
 			\item[(1)] for any edge $x_1y_1\in E_p^1$, $|N_{G'}(x_1)\cap N_{G'}(y_1)|\ge \frac 6 7 k-840$;
 			\item[(2)] $E_q^1=\emptyset$;
 			\item[(3)] for any edge $x_3y_3\in E_r^1$, $|N_{G'}(x_3)\cap N_{G'}(y_3)|\ge \frac 5 {14} k -1400$;
 			\item[(4)] for any edge $x_4y_4\in E_s^1,$ $|N_{G'}(x_4)\cap N_{G'}(y_4)|\geq \frac{5}{14}k-1400.$
 		\end{itemize}
 	\end{claim}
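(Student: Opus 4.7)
\medskip

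\noindent\textbf{Proof plan.}
My approach is to first establish one clean structural containment for $G'$-neighborhoods of vertices in the parts $N_i$, and then combine it with the size bounds already derived in \eqref{Equ:new bounds of N_i}, \eqref{Equ: bounds of Z}, and \eqref{Equ:a<121}. The pivotal lemma I would prove is: for every $x\in N_i$ one has $N_{G'}(x)\subseteq N_{i-1}\cup N_{i+1}\cup Z$. Indeed $x$ is $G'$-adjacent to both $v_{i-1}$ and $v_{i+1}$, so a $G'$-neighbor $w$ of $x$ lying in $N_i$, $N_{i-2}$, or $N_{i+2}$ would share $v_{i-1}$ or $v_{i+1}$ as a common $G'$-neighbor with $x$, producing a triangle in $G'$ and contradicting Proposition~\ref{Pro: G' is triangle-free}. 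Since the $N_j$'s are pairwise disjoint and together with $Z$ partition $V(G)$, the containment follows.

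Given this lemma, parts (1) and (3) reduce to inclusion--exclusion using $d_{G'}(x)\ge (1-\alpha)k=\tfrac{13k}{14}$ for $x\in A$. For (1) with $x_1,y_1\in N_i$, both neighborhoods lie in $N_{i-1}\cup N_{i+1}\cup Z$, so
\[
|N_{G'}(x_1)\cap N_{G'}(y_1)|\ge 2(1-\alpha)k-2\bigl(\tfrac{k}{2}+3a-5b\bigr)-(a-2b)=\tfrac{6k}{7}-7a+12b\ge \tfrac{6k}{7}-840.
\]
For (3) with $x_3\in N_i$ and $y_3\in N_{i\pm 2}$, the union of the two containments is the three-part set $N_{i-1}\cup N_{i+1}\cup N_{i\mp 2}\cup Z$, yielding
\[
|N_{G'}(x_3)\cap N_{G'}(y_3)|\ge 2(1-\alpha)k-3\bigl(\tfrac{k}{2}+3a-5b\bigr)-(a-2b)=\tfrac{5k}{14}-10a+17b\ge \tfrac{5k}{14}-1400.
\]
For (2), the containments for $x_2\in N_i$ and $y_2\in N_{i+1}$ intersect only in $Z$ by pairwise disjointness of the $N_j$'s, so $|N_{G'}(x_2)\cap N_{G'}(y_2)|\le |Z|\le a\le 121$. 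Since $x_2y_2$ is heavy, $T(x_2y_2)\ge \tfrac{3k-n-1}{3}$, while the partition $N_G(v)=N_{G'}(v)\sqcup N_H(v)$ gives $T(x_2y_2)\le |N_{G'}(x_2)\cap N_{G'}(y_2)|+2\alpha k\le 121+\tfrac{k}{7}$. Using $n<\tfrac{5k}{2}$ and $k\ge 2^{24}$ produces the contradiction that forces $E_q^1=\emptyset$.

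Part (4) is the main obstacle, because $x_4\in Z$ enjoys no automatic containment. My plan is a case split on $S=\{j:x_4v_j\in E(G')\}$: since $x_4\notin N_i$ for any $i$ means $\{v_{i-1},v_{i+1}\}\not\subseteq N_{G'}(x_4)$, and since $G'$-triangle-freeness rules out two cyclically adjacent elements of $S$, one shows $|S|\le 1$. When $S=\{j\}$, triangle-freeness together with the inclusion $N_{j-1}\cup N_{j+1}\subseteq N_{G'}(v_j)$ forces $N_{G'}(x_4)\subseteq N_j\cup N_{j-2}\cup N_{j+2}\cup Z$; the sub-cases $j\in\{i,i\pm 2\}$ make $N_{G'}(x_4)\cap N_{G'}(y_4)$ collapse to at most one $N_\ell$ plus $Z$ and push $T(x_4y_4)$ below the heavy threshold, a contradiction with $x_4y_4\in E(H)$, while the remaining sub-cases $j\in\{i\pm 1\}$ yield a three-part union exactly as in (3), producing the $\tfrac{5k}{14}-1400$ bound. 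The truly delicate step, where I expect the most work, is $|S|=0$: here I plan to invoke the maximality of $\sum_{i=1}^{5}d_{G'}(v_i)$ in the choice of $v_1v_2v_3v_4v_5v_1$ via a $C_5$-swap, using the high $G'$-degree of $x_4$ and pigeonhole on $\nu_j=|N_{G'}(x_4)\cap N_j|$ to produce $w_1,w_2\in N_{G'}(x_4)$ in distinct $N_j$'s whose combination with two of the $v_\ell$'s gives a new $C_5$ in $G'$; comparing its $G'$-degree sum against the maximum should force the bulk of $N_{G'}(x_4)$ to concentrate on two cyclically consecutive $N_j$'s, effectively reducing to the favorable sub-case of $|S|=1$ and yielding the same $\tfrac{5k}{14}-1400$ bound.
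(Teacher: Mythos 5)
Your treatment of parts (1)--(3) is correct and is essentially the paper's own argument: the containment $N_{G'}(x)\subseteq N_{i-1}\cup N_{i+1}\cup Z$ for $x\in N_i$ plus the size bounds \eqref{Equ:new bounds of N_i}, \eqref{Equ: bounds of Z} give exactly the stated constants, and your version of (2) (bounding $T(x_2y_2)\le |N_{G'}(x_2)\cap N_{G'}(y_2)|+d_H(x_2)+d_H(y_2)\le |Z|+2\alpha k$) is equivalent to the paper's. The problem is part (4), which is where all the real work lies, and your plan for it has two genuine gaps. First, in the $|S|=1$ sub-case $j=i\pm 2$ (with $y_4\in N_i$), the containments only give $N_{G'}(x_4)\cap N_{G'}(y_4)\subseteq N_{i\mp1}\cup Z$; one full part has size about $\tfrac k2$, which is far above the heavy threshold $\tfrac{3k-n-1}{3}\approx \tfrac k6$, so no contradiction with heaviness follows, and the union of the two containment regions covers four parts (size $\approx 2k$), so inclusion--exclusion gives no lower bound either. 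This sub-case can in fact only be closed by showing that $N_{G'}(x_4)$ is \emph{concentrated} on two non-consecutive parts, which your containment-from-$S$ argument does not provide. Second, the case $|S|=0$ — which is the generic case, since there is no reason $x_4\in Z$ should be $G'$-adjacent to any $v_j$ — is left as a sketch, and the proposed mechanism is quantitatively too weak: a $C_5$-swap through $x_4$ (say $x_4w_1v_{j-1}v_jw_2$ with $w_1\in N_{G'}(x_4)\cap N_j$, $w_2\in N_{G'}(x_4)\cap N_{j+1}$) only yields, via maximality, $d_{G'}(x_4)+d_{G'}(w_1)+d_{G'}(w_2)\le d_{G'}(v_{j+1})+d_{G'}(v_{j+2})+d_{G'}(v_{j-2})$, i.e. $d_H(w_1)+d_H(w_2)\ge a_{j+1}+a_{j+2}+a_{j-2}-d_H(x_4)$; since $d_H(x_4)$ may be as large as $\tfrac{k}{14}$ while the $a_i$'s are $O(1)$, this is vacuous and forces no concentration of $N_{G'}(x_4)$.

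The paper's actual engine for (4) is different and you would need it (or a substitute). For any $z\in Z$ and any $i$, triangle-freeness of $G'$ forbids \emph{all} $G'$-edges between $N_{G'}(z)\cap N_i$ and $N_{G'}(z)\cap N_{i\pm1}$; hence every $v\in N_{G'}(z)\cap N_i$ has $d_H(v)\ge k-\bigl(|N_{i-1}|+|N_{i+1}|+|Z|-|N_{G'}(z)\cap N_{i-1}|-|N_{G'}(z)\cap N_{i+1}|\bigr)$, and summing against $2|E(H)|\le \tfrac 92 k$ gives \eqref{Equ: bound of neighborhood of z in Ni}. Pigeonholing $d_{G'}(x_4)\ge (1-\alpha)k$ then shows $N_{G'}(x_4)$ meets two parts at cyclic distance two in at least $\tfrac 37k-558$ vertices each and every other part in at most $27$ vertices. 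Only after this concentration does one argue that $y_4$ cannot lie in the two concentration parts (that is where heaviness is used, and there the third-vertex count genuinely drops below $\tfrac{3k-n-1}{3}$) and compute the $\tfrac{5}{14}k-1400$ bound for the three remaining positions of $y_4$. Without this (or an equivalent) concentration step, your case analysis cannot be completed, so as written the proposal does not prove part (4).
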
	
 		
    \begin{proof}
 	For the first statement, let $x_1y_1\in E_p^1$.
    We may assume that $x_1,y_1\in N_1$,
    then the common neighbors of $x_1$ and $y_1$ in $G'$ must belong to $N_2\cup N_5\cup Z$, otherwise there is a triangle in $G'$ containing $v_2$ or $v_5$.
    Thus, by \eqref{Equ:new bounds of N_i}, \eqref{Equ: bounds of Z} and the definition of $A$, we have that
    $|N_{G'}(x_1)\cap N_{G'}(y_1)|\ge d_{G'}(x_1)+d_{G'}(y_1)-|N_2|-|N_5|-|Z|\ge 2(1-\alpha)k-2(\frac k 2+3a-5b)-(a-2b)=(1-2\alpha)k-7a+12b\ge \frac 6 7 k-840,$ the last inequality holds since $a\le 121$ and $b\ge 1.$

        For the second statement, suppose on the contrary that there is an edge $x_2y_2\in E_q^1$.
    If $x_2y_2z_2$ is a triangle in $G$ and $z_2\in \bigcup_{i=1}^5N_i$, then $z_2$ must belong to $N_{H}(x_2)\cup N_H(y_2)$, otherwise there exist $w\in\{x_2,y_2\}$ and some $i\in[5]$ such that $v_iwz_2$ is a triangle in $G'$.
    Thus, by \eqref{Equ: bounds of Z} and the definition of $A$, the number of triangles in $G$ containing $x_2y_2$ is at most
    $d_{H}(x_2)+d_{H}(y_2)+|Z|\le 2\alpha k+a-2b\le \frac k 7+121 < \frac {3k-n-1} 3$, where the last inequality holds since $n=\frac 5 2 k-b$ and $k\ge 10^8,$ which contradicts that $x_2y_2\in E(H)$ is heavy.
    Hence, $E_q^1=\emptyset$.

 	For the third statement, let $x_3y_3\in E_r^1$.
    We may assume that $x_3\in N_1$ and $y_3\in N_3$, then the common neighbors of $x_3$ and $y_3$ in $G'$ must belong to $N_2\cup Z$.
    Note $N_{G'}(x_3)\subseteq N_2\cup N_5\cup Z,$ which implies that $N_{G'}(x_3)\setminus N_5\subseteq N_2\cup Z.$
    Similarly, $N_{G'}(y_3)\setminus N_4\subseteq N_2\cup Z.$
    Thus, by \eqref{Equ:new bounds of N_i}, \eqref{Equ: bounds of Z} and the definition of $A$, we have that
    $|N_{G'}(x_3)\cap N_{G'}(y_3)|\ge (d_{G'}(x_3)-|N_5|)+(d_{G'}(y_3)-|N_4|)-|N_2|-|Z|\ge 2(1-\alpha) k-3(\frac k 2+3a-5b)-(a-2b)=\frac k 2-2\alpha k-10a+17b\ge \frac 5 {14} k -1400$.
 		
 	For the last statement, let $x_4y_4\in E_s^1$ with $x_4\in Z$.
      We first claim that there is an integer $i\in [5]$ such that $|N_{G'}(x_4)\cap N_i|, |N_{G'}(x_4)\cap N_{i+2}|\geq \frac 3 7 k-410$ and $|N_{G'}(x_4)\cap N_j|\le 27$ for $j\in[5]\setminus\{i,i+2\}$.	
      Indeed, for any vertex $z\in Z$ and any integer $i\in [5]$,
      since $G'$ is triangle free, there is no edge between $N_{G'}(z)\cap N_i$ and $N_{G'}(z)\cap N_{i+1}$ in $G'$.
      Thus, any vertex $v\in N_{G'}(z)\cap N_{i}$ has degree at most
      $(|N_{i-1}|-|N_{G'}(z)\cap N_{i-1}|)+(|N_{i+1}|-|N_{G'}(z)\cap N_{i+1}|)+|Z|$ in $G'$, which is equivalent to
      $d_H(v)\ge k-(|N_{i-1}|+|N_{i+1}|+|Z|-|N_{G'}(z)\cap N_{i+1}|-|N_{G'}(z)\cap N_{i-1}|).$
      Therefore, we can conclude that for any $z\in Z,$
 	\begin{equation}\label{Equ: bound of neighborhood of z in Ni}
 		|N_{G'}(z)\cap N_i|\le \frac{2|E(H)|}{k-(|N_{i-1}|+|N_{i+1}|+|Z|-|N_{G'}(z)\cap N_{i+1}|-|N_{G'}(z)\cap N_{i-1}|)}.
 	\end{equation}
     Since $d_{G'}(x_4)\ge k-\alpha k$, without loss of generality, we may assume that $|N_{G'}(x_4)\cap N_1|\ge (k-\alpha k-|Z|)/5\ge (k-\alpha k-a+2b)/5$.
     By \eqref{Equ:new bounds of N_i} and \eqref{Equ: bounds of Z},
     since $k\ge 10^8$ and $a\le 121$,
     we have $$|N_{G'}(x_4)\cap N_2|\le \frac{9k/2}{k-2(\frac k 2 +3a-5b)-(a-2b)+(k-\alpha k-a+2b)/5}=\frac{45k}{\frac {13} 7 k-72a+124b}\le 27.$$
     Similarly, we have $|N_{G'}(x_4)\cap N_5|\le 27$.
     Furthermore, without loss of generality, we may assume that
     $|N_{G'}(x_4)\cap N_3|\ge (k-\alpha k-|N_1|-|Z|-54)/2\ge (\frac k 2-\alpha k-4a+7b-54)/2$.
     Then, by using \eqref{Equ: bound of neighborhood of z in Ni} again, we have $|N_{G'}(x_4)\cap N_4|\le 27$.
 	Therefore, by \eqref{Equ:new bounds of N_i} and \eqref{Equ: bounds of Z},  since $a\le 121$ and $b\ge 1$,
    we can get that
    $$|N_{G'}(x_4)\cap N_1|\ge k-\alpha k -|N_3|-|Z|-81\ge \frac k 2-\alpha k- 4a+7b-81\ge \frac 3 7 k-558.$$
    Similarly, we can also get that $|N_{G'}(x_4)\cap N_3|\ge\frac 3 7 k-558$.
    Thus, we have proved that most of the vertices of $N_{G'}(x_4)$ are contained in $N_1$ and $N_3.$

        Next, we show that $x_4$ and $y_4$ have many common neighbors in $G'.$
        First, we claim that $y_4\notin (N_1\cup N_3)$.
        By symmetry, we only need to show that $y_4\notin N_1.$
        Suppose on the contrary that $y_4\in N_1$, then the third vertex of any triangle in $G$ containing $x_4y_4$ must belong to $(N_{G}(x_4)\cap (N_2\cup N_5\cup Z))\cup (N_{G}(y_4)\cap (N_1\cup N_3\cup N_4))$.
        It is easy to see that $|N_{G}(x_4)\cap (N_2\cup N_5\cup Z)|\le d_{H}(x_4)+|N_{G'}(x_4)\cap N_2|+|N_{G'}(x_4)\cap N_5|+|Z|\le \alpha k +a-2b+54$
        and $|N_{G}(y_4)\cap (N_1\cup N_3\cup N_4)|\le d_{H}(y_4)\le \alpha k$.
        Thus the number of triangles containing edge $x_4y_4$ is at most $2\alpha k +a-2b+54\le \frac k 7 +175<\frac {3k-n-1} 3$, where the last inequality holds since $n=\frac 5 2 k-b$ and $k\ge 10^8,$ which contradicts that $x_4y_4\in E(H)$ is heavy.
        If $y_4\in N_2$, then $N_{G'}(y_4)\subseteq N_1\cup N_3\cup Z$.
    By \eqref{Equ:new bounds of N_i} and \eqref{Equ: bounds of Z},
    \begin{align*}
        |N_{G'}(x_4)\cap N_{G'}(y_4)|&\ge |N_{G'}(x_4)\cap (N_1\cup N_3\cup Z)|+d_{G'}(y_4)-|N_1|-|N_3|-|Z|\\
        &\ge 2(\frac 3 7 k-558)+\frac {13}{14}k-2(\frac k 2+3a-5b)-(a-2b)\ge \frac {11}{14}k-1951\ge \frac{5}{14}k-1400.
    \end{align*}
 	If $y_4\in N_4$, then $N_{G'}(y_4)\subseteq N_3\cup N_5\cup Z$.
    By \eqref{Equ:new bounds of N_i} and \eqref{Equ: bounds of Z},
    \begin{align*}
        |N_{G'}(x_4)\cap N_{G'}(y_4)|&\ge |N_{G'}(x_4)\cap (N_3\cup N_5\cup Z)|+d_{G'}(y_4)-|N_3|-|N_5|-|Z|\\
        &\ge \frac 3 7 k-558+\frac {13}{14}k-2(\frac k 2+3a-5b)-(a-2b)\ge \frac{5}{14}k-1400.
    \end{align*}
    Similarly, if $y_4\in N_5$, then $N_{G'}(y_4)\subseteq N_1\cup N_4\cup Z$ and we can show that $|N_{G'}(x_4)\cap N_{G'}(y_4)|\ge \frac{5}{14}k-1400$. This completes the proof of Claim~\ref{Clm: number of triangles}.	
     \end{proof}

\subsubsection{Reducing to the cases $a\in \{2,3\}$}
Having the refined structure, we now proceed to complete the proof of Proposition~\ref{Pro: G'-U is C5-free}.
By double-counting the edges of $E(H[B])\setminus E(H[Z]),$ we have that
$|E^2|+2|E^3|=\sum_{i=1}^5\sum_{w\in B\cap N_i}d_{H}(w)+\sum_{z\in Z\cap B}|N_{H}(z)\cap (\bigcup_{i=1}^5N_i)|.$
Since $E^0=E(H)\setminus E(H[Z]),$
we have that
\begin{equation}\label{Equ:2|E^0|}
 		\begin{split}
 		2(|E^1|+|E^2|&+|E^3|)=2|E^0|=\sum_{i=1}^{5}\sum_{w\in N_i}d_{H}(w)+\sum_{z\in Z}\Big|N_{H}(z)\cap (\bigcup_{i=1}^5N_i)\Big|=\sum_{i=1}^5\sum_{w\in A\cap N_i}d_{H}(w)\\
 		&+\sum_{i=1}^5\sum_{w\in B\cap N_i}d_{H}(w)+\sum_{z\in Z}\Big|N_{H}(z)\cap (\bigcup_{i=1}^5N_i)\Big|\ge \sum_{i=1}^5\sum_{w\in A\cap N_i}d_{H}(w)+|E^2|+2|E^3|.
\end{split}
\end{equation}
Recall that any vertex $w\in N_i$ satisfies that $d_{H}(w)\ge d_{H}(v_i)=a_i$.
By \eqref{Equ:new bounds of N_i}, we have $|N_i|\ge \frac k 2 -a.$
Combining with \eqref{Equ:2|E^0|} and $|B|\le 63,$ we have
\begin{equation}\label{Equ: bound of sum of E^1 and E^2}
 	2|E^1|+|E^2|\ge \sum_{i=1}^5\sum_{w\in A\cap N_i}d_{H}(w)\ge \sum_{i=1}^5a_i|N_i\cap A|\ge \sum_{i=1}^5a_i(|N_i|-|B|)\ge \frac{ak}{2}-a^2-63a.
\end{equation}
Let $T_2$ be a set of triangles containing at least one edge in $E^2$.
Let $M$ be the number of pairs $(e,T)$ satisfying that $T\in T_2$ contains $e\in E^2$.
By the definition of $H$, we get $M\ge \frac{3k-n-1}{3}|E^2|.$
Since any triangle in $G$ contains at most two edges in $E(H[A,B])$, we get $M\le 2|T_2|.$
Hence,
\begin{equation}\label{Equ:T2}
|T_2|\ge \frac{M}{2}\ge \frac{3k-n-1}{6}|E^2|\ge \frac{k}{12}|E^2|.
\end{equation}
Since $G'=G\setminus E(H)$ is triangle-free, all the triangles we have considered in Claim~\ref{Clm: number of triangles} are disjoint from each other and disjoint from the triangles in $T_2.$
Then, using Claim~\ref{Clm: number of triangles} (note that $|E_q^1|=0$), \eqref{Equ: bound of sum of E^1 and E^2} and \eqref{Equ:T2},
we can derive that
	\begin{align}\label{Equ: lower bound of number of triangles for a>4}
            \nonumber
			T(G)
            &\ge \left(\frac 6 7 k-840\right)|E_p^1|+\left(\frac {5}{14} k-1400\right)|E_r^1|+\left(\frac {5}{14} k-1400\right)|E_s^1|+|T_2|\\
            &\ge\frac {k}{6}\Big(|E_p^1|+|E_r^1|+|E_s^1|\Big)+ \frac{k}{12}|E^2|
            =\frac{k}{12}\Big(2|E^1|+|E^2|\Big)
			\ge \frac{a}{24}k^2-\frac{a^2+63a}{12}k.
	\end{align}

Suppose that $a\geq 4$.
Then by \eqref{Equ:a<121}, we have $4\le a\le 121$.
Since $k\ge 10^8$, $n=\frac{5}{2}k-b$ and $b\le \frac{a}{2}$,
it is easy to see that using \eqref{Equ: lower bound of number of triangles for a>4},
we have $T(G)\ge \frac 1 6 k^2-\frac{67}{3}k>\frac{1}{8}k^2+\frac{b-1}{4}k=\frac k 4(3k-n-1),$
a contradiction to our assumption \eqref{Equ: T(G)<k/4(3k-n-1)}.
Hence, we only need to examine the remaining two cases of $a=2$ and $a=3$, which will be addressed in the subsequent subsections.

\subsubsection{The case when $a=3$}
In this case, since $1\le b\le \frac a 2$, we get that $b=1$ and thus $n=\frac 5 2 k-1.$
Then, by~\eqref{Equ: bounds of Z}, $\frac 1 2\le |Z|\le 1,$ which implies that $|Z|=1$.
Let $Z=\{z\}.$
        By \eqref{Equ:new bounds of N_i}, we have that $\frac k 2-2\le |N_i|\le \frac k  2+4$ for $i\in [5]$.
        We first claim that $|B|\ge 2.$
        Indeed, if $|B|\le 1$, then $|E^2|\le k$.
        By \eqref{Equ: bound of sum of E^1 and E^2},
        we have $2|E^1|+|E^2|\ge \frac{ak}{2}-a^2-63a\ge\frac {3}{2}k-200,$
        which implies that $|E^1|\ge \frac{k}{4}-100.$
        Then, since $k\ge 10^8,$ Claim~\ref{Clm: number of triangles} implies that
		\begin{align*}\label{Equ: lower bound of number of triangles when a=3}
		T(G)
        &\ge \left(\frac 6 7 k-840\right)|E_p^1|+\left(\frac {5}{14} k-1400\right)|E_r^1|+\left(\frac {5}{14} k-1400\right)|E_s^1|+\frac{k}{12}|E^2|\\
		&\ge \left(\frac{5}{14}k-1400\right)|E^1|+\frac{k}{12}|E^2|
         \ge \left(\frac{5}{14}k-1400\right)\left(\frac{k}{4}-100\right)+\frac{k^2}{12}
		>\frac{1}{8}k^2=\frac{k(3k-n-1)}{4},
		\end{align*}
		which is a contradiction to our assumption \eqref{Equ: T(G)<k/4(3k-n-1)}.
		Thus, we may assume that $|B|\ge 2$.
        Then, since $|Z|=1$, we can get that $|B\cap N_i|\ge 1$ for some integer $i\in [5]$.
        Without loss of generality, we may assume that there is a vertex $w_1\in B\cap N_1$.

        Next, we show that $|N_{H}(w_1)\cap (N_1\cup N_3\cup N_4)|\ge \frac{k}{28}.$
        Indeed, if $|N_{H}(w_1)\cap (N_1\cup N_3\cup N_4)|< \frac{k}{28}$, then $|N_{H}(w_1)\cap (N_2\cup N_5)|\ge d_{H}(w_1)-\frac{k}{28}-1\ge \frac{k}{28}-1>|B|$.
		We may choose a vertex $w_2\in N_{H}(w_1)\cap N_2\cap A$.
        Then, since $k\ge 10^8,$ the number of triangles in $G$ containing $w_1w_2$ is at most
        $$|N_{H}(w_1)\cap (N_1\cup N_3\cup N_4)|+d_{H}(w_2)+|Z|<\frac{k}{28}+\frac{k}{14}+1< \frac{k}{6}=\frac{3k-n-1}{3},$$ which is a contradiction to $w_1w_2\in E(H)$ is heavy.

        We denote the set of vertex in $(N_2\cup N_5)\cap A$ that are not adjacent to $w_1$ in $G$ by $S.$
        In the following, we prove that $|S|$ is big and the degree of most vertices of $S$ in $H$ is larger than $v_2$ or $v_5.$
        In this way, we can improve the bound of~\eqref{Equ: bound of sum of E^1 and E^2} and get a contradiction.

		Since $|N_{H}(w_1)\cap (N_1\cup N_3\cup N_4)|\ge \frac {k}{28}$,
        $|N_G(w_1)\cap (N_2\cup N_5)|=|N_{G'}(w_1)\cap (N_2\cup N_5)|+|N_H(w_1)\cap (N_2\cup N_5)|\le k-|N_{H}(w_1)\cap (N_1\cup N_3\cup N_4)|\le k-\frac {k}{28}$.
		Thus, we get that
        \begin{equation}\label{Equ:lower bound of S}
         |S|\ge|N_2|+|N_5|-|N_G(w_1)\cap (N_2\cup N_5)|-|B|\ge 2(\frac k 2-2)-k+\frac {k}{28}-63\ge \frac {k}{28}-67.
        \end{equation}
        Next, we analyze the connection between these vertices and $z$.
        For $i\in[5],$ if all the vertices $v_i$ are not contained in $N_{G'}(z)$,
        then similar to \eqref{Equ: lower bound of Ni},
        we can get $|N_i|\ge d_{G'}(v_{i-1})+d_{G'}(v_i)+d_{G'}(v_{i+1})-n+1,$
        which implies that
        $n-1=\sum_{i=1}^5|N_i|\ge 3\sum_{i=1}^5d_{G'}(v_i)-5n+5=3(5k-3)-5n+5,$ and thus we have
        $n\ge \frac 5 2 k-\frac 1 2,$ which is a contradiction to $n=\frac 5 2 k-1.$
        Thus, we may assume that $zv_{j}\in E(G')$ for some $j\in[5].$
        According to our definition of $N_i,$ for $i\neq j$, $zv_i\notin E(G').$
        Then, similar to \eqref{Equ: bound of union of Nis},
        we can get that $n-1=\sum_{i=1}^5|N_i|\ge 3\sum_{i=1}^5d_{G'}(v_i)-5n+2=3(5k-3)-5n+2,$ and thus we have
        $n\ge \frac 5 2 k-1,$
        which implies that inequality holds.
        Therefore, we can get
        $N_{G'}(v_j)=N_{j-1}\cup N_{j+1}\cup \{z\}$
        and for any $w\in N_j,$ $N_{G'}(w)\subseteq N_{j-1}\cup N_{j+1}\cup \{z\}.$
        For $i\in [5],$ $i\neq j,$
        $N_{G'}(v_i)=N_{i-1}\cup N_{i+1}$
        and for any $w\in N_i,$ $N_{G'}(w)\subseteq N_{i-1}\cup N_{i+1}\cup \{z\}.$

        First note that $N_{G'}(z)\cap (N_{j-1}\cup N_{j+1})=\emptyset,$ or we will get a triangle in $G'.$
        Let $Z_j=N_j\setminus (N_{G'}(z)\cup B),$ $Z_{j-2}=(N_{G'}(z)\cap N_{j-2})\setminus B$ and $Z_{j+2}=(N_{G'}(z)\cap N_{j+2})\setminus B.$
        Now, we claim that $|Z_j|,|Z_{j-2}|, |Z_{j+2}| \le \frac{k}{112}.$
        Indeed, if $|Z_j|>\frac{k}{112},$
        then, since $zv_j\in E(G'),$ comparing the vertex of $Z_j$ with $v_j,$
        we can get
        $\sum_{i=1}^5\sum_{w\in A\cap N_i}d_{G'}(w)\le \sum_{i=1}^5(k-a_i)|N_i\cap A|-\frac{k}{112}.$
        By~\eqref{Equ: bound of sum of E^1 and E^2}, this implies that
        $$2|E^1|+|E^2|\ge \sum_{i=1}^5\sum_{w\in A\cap N_i}d_{H}(w)\ge \sum_{i=1}^5a_i|N_i\cap A|+\frac{k}{112}\ge 3\Big(\frac k 2 -65\Big)+\frac{k}{112}=\frac{169}{112}k-200>\frac{3}{2}k,$$
        and by~\eqref{Equ: lower bound of number of triangles for a>4},
        since $k\ge 10^8,$
        we can get the number of triangles in $G$ to be at least,
        $$T(G)\ge \frac{k}{12}\Big(2|E^1|+|E^2|\Big)\ge \frac{k}{12}\cdot\frac{3}{2}k=\frac{1}{8}k^2=\frac{k(3k-n-1)}{4},$$
		which is a contradiction to our assumption \eqref{Equ: T(G)<k/4(3k-n-1)}
        shows that $|Z_j|\le \frac{k}{112}.$
        We denote the property that $2|E^1|+|E^2|>\frac{3}{2}k$ by $(\star).$
        By the analysis above, when we get $(\star),$ we are done.
        The proof of the remaining two cases is similar to the above.
        If $|Z_{j-2}|> \frac{k}{112}$ and $ |Z_{j+2}|=0,$
        then we can get that
        $\sum_{i=1}^5\sum_{w\in A\cap N_i}d_{G'}(w)\le \sum_{i=1}^5(k-a_i)|N_i\cap A|+|Z_{j-2}|-2|Z_{j-2}|\le \sum_{i=1}^5(k-a_i)|N_i\cap A|-\frac{k}{112}.$
        If $|Z_{j-2}|> \frac{k}{112}$ and $ |Z_{j+2}|\neq 0,$
        then we can get that
        $\sum_{i=1}^5\sum_{w\in A\cap N_i}d_{G'}(w)\le \sum_{i=1}^5(k-a_i)|N_i\cap A|+|Z_{j-2}|+|Z_{j+2}|-2|Z_{j-2}||Z_{j+2}|\le \sum_{i=1}^5(k-a_i)|N_i\cap A|-\frac{k}{112}+1.$
        Doing the above process again, we will get a contradiction.
        Therefore, we have $|Z_{j-2}|,|Z_{j+2}|\le \frac{k}{112}.$

        Now, recall that $w_1\in B\cap N_1$,
        $S$ is the set of vertex in $(N_2\cup N_5)\cap A$ that are not adjacent to $w_1$ in $G$
        and $zv_{j}\in E(G')$ for some $j\in[5].$
        By symmetry, it suffices to consider $j\in \{1,2,3\}.$
        If $j=1,$ since all the vertices of $S$ are not adjacent to $w_1$ in $G$,
        comparing these vertices with $v_2$ or $v_5,$ by~\eqref{Equ:lower bound of S},
        we can get that
        $$2|E^1|+|E^2|\ge \sum_{i=1}^5\sum_{w\in A\cap N_i}d_{H}(w)\ge \sum_{i=1}^5a_i|N_i\cap A|+|S|> 3\Big(\frac k 2 -65\Big)+\frac{k}{112}>\frac{3}{2}k,$$
        again we get $(\star)$.
        If $j=2,$ let $S'=S\setminus (Z_2\cup Z_5).$
        Since $|Z_2|,|Z_5|\le \frac{k}{112},$
        we have $|S'|\ge|S|-2\cdot\frac{k}{112}\ge \frac{k}{56}-67.$
        And for $v\in S'\cap N_2,$ $v$ is not adjacent to $z$.
        For $v\in S'\cap N_5,$ $v$ is not adjacent to $z$ and $w_1$.
        Comparing these vertices with $v_2$ or $v_5,$
        we can get that
        $$2|E^1|+|E^2|\ge \sum_{i=1}^5\sum_{w\in A\cap N_i}d_{H}(w)\ge \sum_{i=1}^5a_i|N_i\cap A|+|S'|> 3\Big(\frac k 2 -65\Big)+\frac{k}{112}>\frac{3}{2}k,$$
        again we get $(\star)$.
        If $j=3,$ since $G'$ is triangle-free and $N_{G'}(v_3)=N_2\cup N_4\cup\{z\},$
        $N_{G'}(z)\cap N_2=N_{G'}(z)\cap N_4=\emptyset.$
        Let $S''=S\setminus Z_{5},$
        since $|Z_5|\le \frac{k}{112},$ we have
        $|S''|\ge|S|-\frac{k}{112}\ge \frac{3}{112}k-67.$
        And for any vertex $v\in S''$, $v$ is not adjacent to $z$ and $w_1$.
        Comparing these vertices with $v_2$ or $v_5,$
        we can get that
        $$2|E^1|+|E^2|\ge \sum_{i=1}^5\sum_{w\in A\cap N_i}d_{H}(w)\ge \sum_{i=1}^5a_i|N_i\cap A|+|S''|> 3\Big(\frac k 2 -65\Big)+\frac{k}{112}>\frac{3}{2}k,$$
        again we get $(\star)$.
        Therefore, we have completed the proof when $a=3.$

\subsubsection{The case when $a=2$}
In this case, since $1\le b\le \frac a 2$, we get that $b=1$ and thus $n=\frac 5 2 k-1.$
       By \eqref{Equ: bounds of Z}, we have $|Z|=0$.
       Then, $V(G)=\bigcup_{i=1}^5N_i$.
       By \eqref{Equ:new bounds of N_i}, we have that $\frac k 2-1\le |N_i|\le \frac k 2+1$ for $i\in[5]$.
       Note that the equality holds in \eqref{Equ: bound of union of Nis}, which implies that $N_{G'}(v_i)=N_{i-1}\cup N_{i+1}$ holds for $i\in [5].$
       Moreover, for any $w\in N_i,$ $N_{G'}(w)\subseteq N_{i-1}\cup N_{i+1}.$
       Let $\mathcal{E}=\cup_{i=1}^5 E(H[N_i,N_{i+1}]).$
       We define $H_1=H\setminus \mathcal{E}$
       and $G_1=G'\cup\mathcal{E}.$
       By our definition, $G_1$ is also triangle-free.
       Similarly, we classify the edges of $H_1$ (in fact, here we can think of $H_1$ and $G_1$ as the previous $H$ and $G'$ respectively).
       Let $A_1=\{v\in V(G):d_{H_1}(v)\le \frac k {14}\}$ and $B_1=V(G)-A_1$.
       Since $|E(H_1)|\le|E(H)|\le \frac 9 4 k$, we have $|B_1|\le 63$.

	  Let $F^0=F_p\cup F_r,$ where $F_p=\{xy\in E(H_1): x, y\in N_i \hbox{ for some }i\in[5]\}$
        and
        $F_r=\{xy\in E(H_1): x\in N_i,\  y\in N_j \hbox{ and } |i-j|=2 \hbox{ for some } i,j\in[5] \}.$
        Let $F^1=F^0\cap E(H_1[A_1]),$
        $F^2=F^0\cap E(H_1[A_1,B_1])$ and $F^3=F^0\cap  E(H_1[B_1])$.
		Note that $F^0=E(H_1)$. Thus, we have
		\begin{equation*}
		\begin{split}
		2(|F^1|+|F^2|+|F^3|)&=2|F^0|
		=\sum_{i=1}^{5}\sum_{w\in N_i}d_{H_1}(w)
	    =\sum_{i=1}^5\left(\sum_{w\in A_1\cap N_i}d_{H_1}(w)+\sum_{w\in B_1\cap N_i}d_{H_1}(w)\right),
		\end{split}
		\end{equation*}
		and
		$|F^2|+2|F^3|=\sum_{i=1}^5\sum_{w\in B_1\cap N_i}d_{H_1}(w),$
		which implies that
		\begin{equation}\label{Equ1: bound of E1 for case a=2}
		2|F^1|+|F^2|=\sum_{i=1}^5\sum_{w\in A_1\cap N_i}d_{H_1}(w).
		\end{equation}

		For any integer $i\in [5]$ and any vertex of $w\in N_i\cap B_1$,
        since $|N_{i-1}|+|N_{i+1}|=k-a_i$,
        there are at least $|N_{i-1}|+|N_{i+1}|-(k-d_{H_1}(w))-|B_1|=d_{H_1}(w)-a_i-|B_1|$ vertices of $(N_{i-1}\cup N_{i+1})\cap A_1$ are not adjacent to $w$ in $G$.
        Thus, for $i\in [5]$, summing up the number of vertices of $(N_{i-1}\cup N_{i+1})\cap A_1$ that are not adjacent to $w$ with $w\in N_i\cap B_1$, we can get that
		\begin{equation}\label{Euq2: bound of E1 for case a=2}
		\sum_{i=1}^5\sum_{w\in A_1\cap N_i}(d_{H_1}(w)-a_i)\geq \sum_{i=1}^5\sum_{w\in B_1\cap N_i}(d_{H_1}(w)-a_i-|B_1|).
		\end{equation}
		By \eqref{Equ1: bound of E1 for case a=2} and \eqref{Euq2: bound of E1 for case a=2},
        since $|N_i|\ge \frac k 2-1$ and $|B_1|\le 63,$ we can get that
		\begin{align*}
		2|F^1|
        &=\sum_{i=1}^5\sum_{w\in A_1\cap N_i}d_{H_1}(w)-|F^2|
		\ge \sum_{i=1}^5 a_i|N_i\cap A_1|+ \sum_{i=1}^5\sum_{w\in A_1\cap N_i}(d_{H_1}(w)-a_i)-\sum_{w\in B_1}d_{H_1}(w)\\
		&\ge 2\Big(\frac k 2-1-|B_1|\Big)- \sum_{i=1}^5\sum_{w\in B_1\cap N_i}(a_i+|B_1|)
		\ge k-128-|B_1|(2+|B_1|)
        \ge k-4400.
		\end{align*}
        Let $F_{p}^1=F_p\cap F^1$ and $F_{r}^1=F_r\cap F^1.$
        We can obtain the same conclusion as for the Claim~\ref{Clm: number of triangles} (replace $G'$ and $H$ with $G_1$ and $H_1$ respectively).
        Since $G_1$ is triangle-free and $E(G_1)=(E(G)\setminus E(H)) \cup \mathcal{E},$
        all the triangles we considered are disjoint from each other.
		Using Claim~\ref{Clm: number of triangles} and $k>10^8$, we have
		\begin{equation*}
		T(G)\ge \left(\frac {5}{14}k-1400\right)\left(\frac{k}{2}-2200\right)>\frac{k^2}{8}=\frac{k(3k-n-1)}{4},
		\end{equation*}
        which is a contradiction to our assumption \eqref{Equ: T(G)<k/4(3k-n-1)}.
        We have completed the proof of Proposition~\ref{Pro: G'-U is C5-free}. \QED

\section{Concluding remarks}
Our main result establishes a tight lower bound for the number of triangles in $n$-vertex $k$-regular graphs under certain conditions
(specifically, when $n$ is odd and sufficiently large, and $k$ is an even integer satisfying $\frac{2n}{5}<k<\frac{n}{2}$).
The same problem was also investigated by Lo in \cite{Lo10,Lo12} for the case of $k\geq \frac{n}{2}$.
For related problems concerning the count of cliques $K_{r}$ in regular graphs or graphs with a bounded minimum degree, we recommend referring to \cite{Lo12}.

It is also interesting to explore the minimum number $t_H(n,k)$ of copies of a general graph $H$ in an $n$-vertex $k$-regular graph.
Results of Cambie, de Joannis de Verclos, and Kang \cite{CdVK} indicate that there is a distinction based on whether the chromatic number $\chi(H)=3$ or not (see Theorems 2, 13, and 14 in \cite{CdVK}).
For all odd cycles $C_{2\ell-1}$ where $\ell \geq 2$, Cambie et al. \cite{CdVK} obtained the following tight result:
$$\text{If $n$ is even and $k>\frac{n}{2}$ or $n$ is odd and $k>\frac{2n}{2\ell+1}$, then } t_{C_{2\ell-1}}(n,k)>0.$$
Assuming $n$ is odd and $k$ is even, satisfying $\frac{2n}{2\ell+1}<k<\frac{n}{2}$, it would be of great interest to determine the exact value of $t_{C_{2\ell-1}}(n,k)$.
This endeavor to generalize Theorem~\ref{Thm: Main} appears to be challenging.

	\bibliographystyle{unsrt}

%\section*{Appendices}
%\noindent\textbf{A:}
%\bigskip
%\bigskip
%\noindent\textbf{B:}

%\bigskip
%
%{\it E-mail address:} majlhe@ust.hk
%
%\medskip
%
%{\it E-mail address:} xmhou@ustc.edu.cn
%
%\medskip
%
%{\it E-mail address:} jiema@ustc.edu.cn
%
%\medskip
%
%{\it E-mail address:} xiety@ustc.edu.cn

\end{document}